\documentclass[12pt]{article}
\usepackage{geometry,amsmath,amssymb,bbm,theorem,amscd}
\geometry{letterpaper}

%%%%%%%%%% Start TeXmacs macros

\newcommand{\nequiv}{\not\equiv}
\newcommand{\nin}{\not\in}
\newcommand{\scdots}{\cdot\!\cdot\!\cdot}
\newcommand{\tmem}[1]{{\em #1\/}}
\newcommand{\tmmathbf}[1]{\ensuremath{\boldsymbol{#1}}}
\newcommand{\tmop}[1]{\ensuremath{\operatorname{#1}}}
\newcommand{\tmscript}[1]{\text{\scriptsize{$#1$}}}
\newcommand{\tmtextit}[1]{{\itshape{#1}}}
\newcommand{\tmtexttt}[1]{{\ttfamily{#1}}}
\newenvironment{proof}{\noindent\textbf{Proof\ }}{\hspace*{\fill}$\Box$\medskip}
\newtheorem{conjecture}{Conjecture}
{\theorembodyfont{\rmfamily}\newtheorem{example}{Example}}
\newtheorem{lemma}{Lemma}
\newtheorem{proposition}{Proposition}
\newtheorem{theorem}{Theorem}
%%%%%%%%%% End TeXmacs macros

\begin{document}

% AMS Subject Classifications: 22E50; 20F55, 20C08

\title{Casselman's Basis of Iwahori vectors and the Bruhat
Order}\author{Daniel Bump and Maki Nakasuji}\maketitle

\begin{abstract}
  Casselman defined a basis $f_u$ of Iwahori fixed vectors of a spherical
  representation $(\pi, V)$ of a split semisimple $p$-adic group $G$ over a
  nonarchimedean local field $F$ by the condition that it be dual to the
  intertwining operators, indexed by elements $u$ of the Weyl group $W$. On
  the other hand there is a natural basis $\psi_u$ and one seeks to find the
  transition matrices between the two bases. Thus let $f_u = \sum_v \tilde{m}
  (u, v) \psi_v$ and $\psi_u = \sum_v m (u, v) f_v$. Using the Iwahori Hecke
  algebra we prove that if a combinatorial condition is satisfied then $m (u,
  v) = \prod_{\alpha} \frac{1 - q^{- 1} \tmmathbf{z}^{\alpha}}{1
  -\tmmathbf{z}^{\alpha}}$ where $\tmmathbf{z}$ are the Langlands parameters
  for the representation and $\alpha$ runs through the set $S (u, v)$ of
  positive coroots $\alpha \in \hat{\Phi}$ (the dual root system of $G$) such
  that $u \leqslant v r_{\alpha} < v$ with $r_{\alpha}$ the reflection
  corresponding to $\alpha$. The condition is conjecturally always satisfied
  if $G$ is simply-laced and the Kazhdan-Lusztig polynomial $P_{w_0 v, w_0 u}
  = 1$ with $w_0$ the long Weyl group element. There is a similar formula for
  $\tilde{m}$ conjecturally satisfied if $P_{u, v} = 1$. This leads to various
  combinatorial conjectures.
\end{abstract}

\section{Introduction}

Casselman~{\cite{CasselmanSpherical}} described an interesting basis of the
vectors in a spherical representation of a reductive $p$-adic group that are
fixed by the Iwahori subgroup. This basis is defined as being dual to the
standard intertwining operators. He remarked (p.402) that it was an unsolved
and apparently difficult problem to compute this basis explicitly. For his
applications, which include the computation of the spherical function and, in
Casselman and Shalika~{\cite{CasselmanShalika}} the spherical Whittaker
function, it is only necessary to compute one element of the basis explicitly.
Despite this difficulty, we began to look at the Casselman basis and we
obtained interesting partial results. These lead to some interesting
combinatorial questions about the Bruhat order.

Let $G$ be a split semisimple algebraic group over the nonarchimedean field
$F$. Let $B (F)$ be the standard Borel subgroup of $G (F)$, $K$ the standard
maximal compact subgroup, and $J$ the Iwahori subgroup of $K$. (See Section~2
for definitions of these.)

We write $B = T N$ where $T$ is the maximal split torus and $N$ its unipotent
radical. If $\chi$ is a character of $T (F)$ then $V (\chi)$ will be the
representation of $G (F)$ induced from $\chi$. Its space consists of locally
constant functions $f : G (F) \longrightarrow \mathbbm{C}$ such that
\[ f (b g) = (\delta^{1 / 2} \chi) (b g) \]
where $\delta : B (F) \longrightarrow \mathbbm{C}$ is the modular
quasicharacter and $\chi, \delta$ are extended to $B$ to be trivial on $N
(F)$. The action of $G (F)$ is by right translation.

If $\chi$ is in general position then $V (\chi)$ is irreducible. If $\chi$ is
unramified (which we assume) then the space $V (\chi)^J$ of $J$-fixed vectors
has dimension equal to the order of the Weyl group $W$, and so it is natural
to parametrize bases of $V (\chi)^J$ by $W$. There is one natural basis,
namely $\{\phi_w |w \in W\}$ defined as follows. If $b \in B (F), u \in W$ and
$k \in J$, define
\begin{equation}
  \label{standardiwahoribasis} \phi_w (b u k) = \left\{ \begin{array}{ll}
    \delta^{1 / 2} \chi (b) & \text{if $k \in J$, $u = w$,}\\
    0 & \text{otherwise} .
  \end{array} \right.
\end{equation}
It is clear that this is a basis of $V (\chi)^J$.

If $w \in W$ then there is an intertwining integral $M_w : V (\chi)
\longrightarrow V ( {^w \chi})$. It is given by (\ref{intertwiningdef}) below.
These have the property that if $l (w w') = l (w) + l (w')$ then $M_{w w'} =
M_w \circ M_{w'}$, where $l : W \longrightarrow \mathbbm{Z}$ is the length
function. The Casselman basis $\{f_w |w \in W\}$ is the basis defined by the
condition that
\[ (M_w f_v) (1) = \left\{ \begin{array}{ll}
     1 & \text{if $w = v$,}\\
     0 & \text{if $w \neq v$.}
   \end{array} \right. \]
The question of Casselman mentioned above is to express the basis $f_w$ in
terms of the basis $\phi_w$. However we found it better to try to express it
in terms of the basis
\[ \psi_u = \sum_{v \geqslant u} \phi_v, \]
where $\geqslant$ is the Bruhat order. By Verma~{\cite{Verma}} or
Stembridge~{\cite{Stembridge}}
\[ \phi_u = \sum_{v \geqslant u} (- 1)^{l (v) - l (u)} \psi_v, \]
so expressing the $f_w$ in terms of $\psi_w$ is equivalent to Casselman's
question.

This problem can be divided into two parts: first, to compute the values of $m
(u, v) = (M_v \psi_u) (1)$, and second, to invert the matrix $m (u, v)_{u, v
\in W}$. Indeed, if $\tilde{m} (u, v)_{u, v \in W}$ is the inverse matrix, so
$\sum_v \tilde{m} (u, v) m (v, w) = \delta_{u, w}$ (Kronecker $\delta$) then
$\sum_u \tilde{m} (v, u) \psi_u$ will satisfy $M_w \left( \sum_u \tilde{m} (v,
u) \psi_u \right) (1) = \delta_{v, w}$ and so $f_v = \sum_u \tilde{m} (v, u)
\psi_u$ is the Casselman basis.

Let $\hat{\Phi}$ be the root system with respect to $\hat{T}$, the dual torus
of $T$. This is a complex torus in the L-group $^L G$.

If $\alpha \in \hat{\Phi}^+$ let $r_{\alpha}$ be the reflection in the
hyperplace perpendicular to $\alpha$. Thus if $\alpha$ is simple, $r_{\alpha}$
is the simple reflection $s_{\alpha}$. Then for any $u \leqslant y \leqslant
v$ we have
\[ \#\{\alpha \in \hat{\Phi}^+ | u \leqslant y.r_{\alpha} \leqslant v\}
   \geqslant l (v) - l (u) . \]
This statement is known as {\tmem{Deodhar's conjecture}}. The condition is
sometimes written $u \leqslant r_{\alpha} .y \leqslant v$ but this does not
change the cardinality of the set since $y.r_{\alpha} = r_{\beta} .y$ for
another positive root $\beta = \pm y (\alpha)$. This inequality was stated by
Deodhar~{\cite{DeodharConjecture}} who proved it in some cases; the general
statement is a theorem of Dyer~{\cite{Dyer}} and (independently)
Polo~{\cite{Polo}} and Carrell and Peterson (Carrell {\cite{Carrell}}). In
particular, taking $y = v$ or $u$ gives
\[ S (u, v) =\{\alpha \in \hat{\Phi}^+ |u \leqslant v r_{\alpha} < v\},
   \hspace{2em} S' (u, v) =\{\alpha \in \hat{\Phi}^+ |u \leqslant u r_{\alpha}
   < v\}. \]
Then Deodhar's conjecture implies that $S (u, v)$ and $S' (u, v)$ have
cardinality $\geqslant l (v) - l (u)$.

\begin{proposition}
  If the Kazhdan-Lusztig polynomial $P_{u, v} = 1$ then $|S' (u, v) | = l (v)
  - l (u)$. If the Kazhdan-Lusztig polynomial $P_{w_0 v, w_0 u} = 1$ then $|S
  (u, v) | = l (v) - l (u)$.
\end{proposition}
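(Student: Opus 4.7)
The plan is to derive both assertions from the local Bruhat-combinatorial characterization of the equation $P_{u,v} = 1$ due to Carrell and Peterson (extracted from one proof of Deodhar's conjecture). In the form I need, for $u \leq v$ this characterization says that $P_{u,v} = 1$ is equivalent to the Bruhat graph of $[u, v]$ being regular of degree $l(v) - l(u)$ at the bottom vertex $u$; equivalently, the number of positive coroots $\alpha \in \hat{\Phi}^+$ with $u < u r_\alpha \leq v$ attains the Deodhar lower bound $l(v) - l(u)$. Once this count is identified with $|S'(u, v)|$---using Deodhar's inequality to pin down the strict/weak boundary case $u r_\alpha = v$---the first assertion follows immediately.

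For the second assertion, I would reduce to the first by means of the Bruhat-order-reversing involution $x \mapsto w_0 x$, which carries $[u, v]$ bijectively onto $[w_0 v, w_0 u]$ and preserves length differences. Setting $u' = w_0 v$ and $v' = w_0 u$, the hypothesis $P_{w_0 v, w_0 u} = 1$ becomes $P_{u', v'} = 1$. Under the involution, the defining inequalities $u \leq v r_\alpha$ and $v r_\alpha < v$ of $S(u, v)$ transform into $u' r_\alpha \leq v'$ and $u' < u' r_\alpha$, which are precisely the defining conditions for $\alpha \in S'(u', v')$. Hence $|S(u, v)| = |S'(u', v')| = l(v') - l(u') = l(v) - l(u)$ by the first assertion applied to $(u', v')$.

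The hardest step will be isolating the one-point version of the Carrell--Peterson theorem needed here: the most familiar statement is a \emph{global} equivalence across the whole interval $[u,v]$, involving rational smoothness of $X_v$ at every point of $[u,v]$, whereas what I need is the sharper single-vertex implication from $P_{u,v} = 1$ alone to regularity of the Bruhat graph at $u$. This sharper statement is implicit in the proofs of Deodhar's conjecture by Carrell--Peterson and independently by Dyer, but it must be extracted and cited cleanly. A secondary technical point is the minor bookkeeping around the strict-versus-weak right-hand inequality in the definitions of $S$ and $S'$, which is resolved by combining the Carrell--Peterson count with the Deodhar lower bound stated above.
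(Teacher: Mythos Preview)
Your proposal is correct and follows essentially the same approach as the paper: the first statement is obtained by citing Carrell's result (the paper invokes it directly as Theorem~C in \cite{Carrell}, which already gives the one-vertex equality you are worried about having to extract), and the second is reduced to the first via the order-reversing involution $x\mapsto w_0 x$, which identifies $S'(w_0 v,\,w_0 u)$ with $S(u,v)$. Your extra remarks about the strict-versus-weak inequality are a sensible sanity check, but no additional argument beyond the citation and the involution is needed.
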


\begin{proof}
  The first statement follows from Carrell~{\cite{Carrell}}, Theorem~C. If
  $P_{w_0 v, w_0 u} = 1$ then it follows that $|S' (w_0 v, w_0 u) | = l (w_0
  u) - l (w_0 v)$. Since $x \leqslant y$ if and only if $w_0 y \leqslant w_0
  x$, this is equivalent to $|S (u, v) | = l (v) - l (u)$.
\end{proof}

We assume that $\hat{\Phi}$ is simply-laced, that is, of Cartan type $A$, $D$
or $E$. In this case, we make the following conjectures. The unramified
character $\chi = \chi_{\tmmathbf{z}}$ of $T (F)$ is parametrized by an
element $\tmmathbf{z}$ of the complex torus $\hat{T}$ in the L-group $^L G$.
(See Section~\ref{preliminaries}.)

\begin{conjecture}
  \label{conjecture1}Assume that $\hat{\Phi}$ is simply-laced. Suppose that $u
  \leqslant v$ in the Bruhat order. In this case $w_0 v \leqslant w_0 u$.
  Suppose that the $|S (u, v) | = l (v) - l (u)$. Then we conjecture that
  \begin{equation}
    \label{meval} (M_v \psi_u) (1) = \prod_{\alpha \in S (u, v)} \frac{1 -
    q^{- 1} \tmmathbf{z}^{\alpha}}{1 -\tmmathbf{z}^{\alpha}} .
  \end{equation}
\end{conjecture}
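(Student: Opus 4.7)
The plan is to prove the formula by induction on $l(v) - l(u)$, using the factorization $M_{ww'} = M_w \circ M_{w'}$ whenever $l(ww') = l(w) + l(w')$ together with the standard rank-one formulas for $M_s$ on Iwahori-fixed vectors of a principal series. The base case $l(v) = l(u)$ forces $u = v$, so $S(u,u)$ is empty and the product reduces to $1$; here one verifies $(M_u \psi_u)(1) = 1$ directly from the definitions (or from the duality property of Casselman's basis).

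For the inductive step, I would choose a simple reflection $s = s_\beta$ with $vs < v$, set $v' = vs$, and use $M_v = M_{v'} \circ M_s$ to write $(M_v \psi_u)(1) = (M_{v'}(M_s \psi_u))(1)$. The central Hecke-algebraic step is to expand $M_s \psi_u$, which lies in $V({^s\chi})^J$, in terms of the corresponding $\psi$-basis of that space. Since $\psi_u = \sum_{w \geqslant u} \phi_w$ and $M_s$ acts on each $\phi_w$ via the standard rank-one formula, one expects (and must verify) a telescoping identity writing $M_s \psi_u$ as an explicit combination of at most two such basis vectors (indexed by $u$ and $us$), with coefficients that are rational functions of $\tmmathbf{z}^\beta$ depending on whether $us > u$ or $us < u$. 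Applying the inductive hypothesis to the resulting terms yields a recursion for $(M_v \psi_u)(1)$.

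The remaining task is combinatorial: to match the recursion with the product formula, one identifies $S(u,v)$ as $\{\beta\} \sqcup S(u,v')$ (or as an analogous union involving $S(us,v')$) and checks that the hypothesis $|S(u,v)| = l(v) - l(u)$ descends to the sub-instances. I expect this combinatorial propagation to be the main obstacle, since without stability of the set-size identity under Bruhat-descents of the upper index the induction collapses. This is precisely where the Kazhdan-Lusztig hypothesis $P_{w_0 v, w_0 u} = 1$ (equivalently, smoothness of the relevant Schubert variety by Carrell's theorem) is needed: it is the structural input that forces $S$ to behave well under such descents. The simply-laced assumption enters to keep the $\tmmathbf{z}^\beta$ factors uniform across reflections and to avoid short/long-root complications in the rank-one formula.
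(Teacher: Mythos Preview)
First, note that this statement is labelled a \emph{conjecture} in the paper, and the paper does not claim to prove it. What the paper actually proves is Theorem~\ref{maintheorem}: if $v$ admits a \emph{good word} with respect to $u$ (a particular reduced expression $v=s_1\cdots s_n$ such that deleting precisely the letters at the positions indexing $S(u,v)$ yields $u$), then (\ref{meval}) holds. The assertion that such a good word exists whenever $|S(u,v)|=l(v)-l(u)$ is itself an open conjecture (Conjecture~\ref{conjecturegood}), verified only by computer for $A_4$ and $D_4$. So there is no proof in the paper to compare your sketch against; rather, the paper isolates exactly the combinatorial obstruction you would need to overcome.

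Your induction breaks at the point you yourself flag as ``the main obstacle,'' and invoking $P_{w_0v,w_0u}=1$ does not remove it. There are two separate problems. First, the ``telescoping identity'' you expect is not available when $us<u$: Proposition~\ref{secondusefulfact} gives only $\psi(u)\,\mu_{\tmmathbf{z}}(s)\equiv\psi(us)$ modulo terms supported on $\{w:w\geqslant u\}$, and these error terms are in general nonzero and not expressible as a multiple of $\psi_u$. The paper's proof of Theorem~\ref{maintheorem} handles them not by eliminating them at each step but by writing the full product $\mu(s_n)\cdots\mu(s_1)$ as a telescoping sum and using the good-word structure, together with Proposition~\ref{ominusprop}, to show that after the remaining right multiplications each error term stays $\geqslant$ some nontrivial element and is therefore annihilated by the functional $\Lambda$. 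Second, even in the clean case $us>u$ (where Proposition~\ref{firstusefulfact} gives $M_s\psi_u$ exactly as a scalar times $\psi_u$), you need $|S(u,vs)|=l(vs)-l(u)$ to invoke the inductive hypothesis, and for an arbitrary descent $s$ of $v$ this may fail. Choosing a descent for which the hypothesis propagates at every stage is equivalent to producing a good word, which is exactly Conjecture~\ref{conjecturegood}. (The simply-laced assumption is needed for that combinatorial conjecture, not for the rank-one formula, which is uniform; the paper's $B_2$ counterexample illustrates this.)
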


\begin{conjecture}
  \label{conjecture2}Assume that $\hat{\Phi}$ is simply-laced. Suppose that $u
  \leqslant v$ in the Bruhat order. Suppose that $|S' (u, v) | = l (v) - l
  (u)$. Then we conjecture that
  \begin{equation}
    \label{mtildeval} \tilde{m} (u, v) = (- 1)^{|S' (u, v) |} \prod_{\alpha
    \in S' (u, v)} \frac{1 - q^{- 1} \tmmathbf{z}^{\alpha}}{1
    -\tmmathbf{z}^{\alpha}} .
  \end{equation}
\end{conjecture}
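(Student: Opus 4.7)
The plan is to establish Conjecture \ref{conjecture2} by exploiting the matrix inverse relation between $m$ and $\tilde{m}$, combined with the formula (\ref{meval}) for $m(u,v)$ from Conjecture \ref{conjecture1}. Since $f_u = \sum_w \tilde{m}(u,w)\psi_w$ and $(M_v f_u)(1) = \delta_{u,v}$, the coefficients are characterized by
\[ \sum_{u \leqslant w \leqslant v} \tilde{m}(u,w)\, m(w,v) = \delta_{u,v}. \]
Assuming Conjecture \ref{conjecture1} holds on the relevant subintervals, one may substitute the closed form for $m(w,v)$ and attempt to verify that the proposed $\tilde{m}(u,v)$ satisfies this system.

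I would proceed by induction on $l(v)-l(u)$. The base case $u=v$ is immediate since $S'(u,u)=\emptyset$ forces both sides to equal $1$. For the inductive step, isolate the top coefficient $\tilde{m}(u,v)$ and substitute the inductive hypothesis for all lower terms. This reduces Conjecture \ref{conjecture2} to a combinatorial identity of rational functions in $\tmmathbf{z}$: for $u<v$ in the Bruhat order with $|S'(u,v)|=l(v)-l(u)$,
\[ \sum_{u \leqslant w \leqslant v} (-1)^{|S'(u,w)|} \prod_{\alpha \in S'(u,w)} \frac{1-q^{-1}\tmmathbf{z}^{\alpha}}{1-\tmmathbf{z}^{\alpha}} \prod_{\beta \in S(w,v)} \frac{1-q^{-1}\tmmathbf{z}^{\beta}}{1-\tmmathbf{z}^{\beta}} = 0. \]
To prove this, I would look for a sign-reversing involution on the Bruhat interval $[u,v]$ that is compatible with the two factorizations. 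Under the hypothesis $P_{u,v}=1$ (equivalent to $|S'(u,v)|=l(v)-l(u)$ by the proposition above), the interval $[u,v]$ is rationally smooth and its poset structure is tightly constrained (indeed, Carrell's theorem identifies $S'(u,y)$ with the tangent directions in a local cell decomposition for $y\in[u,v]$), which is the structural input one would want to exploit.

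The main obstacle will be this combinatorial identity: it couples the sets $S'(u,w)$ and $S(w,v)$ whose defining conditions live at opposite ends of the Bruhat interval, and the hypotheses of the two conjectures are not automatically inherited by the intermediate subintervals $[u,w]$ and $[w,v]$. To address this I would explore an alternative route through the Iwahori Hecke algebra: express $\psi_u$ and the intertwiner $M_v$ in terms of Bernstein generators and solve $(M_v f_u)(1)=\delta_{u,v}$ directly, using the recursion $M_{vs}=M_s\circ M_v$ for $l(vs)=l(v)+1$. This should yield a step-by-step construction of $f_u$ by applying Demazure-like operators, from which (\ref{mtildeval}) would emerge as a telescoping product indexed by the successive coroots contributing to $S'(u,v)$, with the rational smoothness hypothesis ensuring that no extraneous terms appear.
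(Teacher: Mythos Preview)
The statement you are attempting is labeled a \emph{conjecture} in the paper, and the paper does not prove it. What the paper does prove is the weaker Theorem~\ref{conjtwoprogress}: assuming \emph{both} Conjecture~\ref{conjecture1} and Conjecture~\ref{conjecture3}, and under the stronger hypothesis $P_{u,v}=P_{w_0v,w_0u}=1$, the formula~(\ref{mtildeval}) holds. Your outline is essentially the same strategy as that proof --- induction on $l(v)-l(u)$ via the inverse-matrix relation, reduction to an alternating sum over $[u,v]$, and cancellation by a sign-reversing involution --- so on the level of method you are aligned with the paper. The issue is that you present this as a route to the full conjecture, and the gaps that stop you are exactly the ones that stop the paper.

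Concretely, there are three missing ingredients. First, the sign-reversing involution you ``would look for'' is precisely Conjecture~\ref{conjecture3}: the existence of a reflection $r_\beta$ stabilizing $[u,v]$. The paper states this as an open conjecture (verified by computer only through $A_4$), so invoking it is not a proof. Second, to substitute~(\ref{meval}) for $m(w,v)$ you need the hypothesis of Conjecture~\ref{conjecture1} on every subinterval $[w,v]$, namely $|S(w,v)|=l(v)-l(w)$; this follows from $P_{w_0v,w_0w}=1$, which is inherited from $P_{w_0v,w_0u}=1$ but \emph{not} from your hypothesis $|S'(u,v)|=l(v)-l(u)$ alone. This is why the paper imposes both Kazhdan--Lusztig conditions in Theorem~\ref{conjtwoprogress}. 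You flag this inheritance problem yourself but do not resolve it. Third, Conjecture~\ref{conjecture1} is itself open (the paper proves it only when a good word exists, Theorem~\ref{maintheorem}), so even the substitution step is conditional. Your fallback ``alternative route'' through Bernstein generators and Demazure-type operators is too vague to assess; nothing in the paper suggests that such a direct construction of $f_u$ is available, and if it were, it would likely supersede the conjectural framework entirely.
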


We give an example to show that the assumption that $\hat{\Phi}$ is
simply-laced is necessary. Let $\hat{\Phi}$ have Cartan type $B_2$, with
$\alpha_1, \alpha_2$ being the long and short simple roots, respectively and
$\sigma_1 = s_{\alpha_1}, \sigma_2 = s_{\alpha_2}$ being the simple
reflections. Then we find that when $(u, v) = (\sigma_1, \sigma_1 \sigma_2
\sigma_1)$ or $(\sigma_1, \sigma_1 \sigma_2 \sigma_1 \sigma_2)$ the conclusion
of Conjecture~\ref{conjecture1} fails, though the Kazhdan-Lusztig polynomial
$P_{w_0 v, w_0 u} = 1$. Nevertheless the conjecture is {\tmem{often}} true for
type $B_2$, for these are the only failures. There are 33 pairs $(u, v)$ with
$u \leqslant v$, and Conjecture~\ref{conjecture1} gives the correct value for
$(M_v \psi_u) (1)$ in every case except for these two. Hence it becomes
interesting to ask how the hypothesis in Conjectures~\ref{conjecture1}
and~\ref{conjecture2} should be modified if when $\hat{\Phi}$ is not
simply-laced.

We recall the formula of Gindikin and Karpelevich. Let $\phi^{\circ} =
{^{\chi} \phi^{\circ}}$ be the standard spherical vector in $\tmop{Ind}_B^G
(\delta^{1 / 2} \chi)$ defined by $\phi^{\circ} (b k) = \delta^{1 / 2} \chi
(b)$ when $b \in B (F)$ and $k \in K$. In this case
\begin{equation}
  \label{gindikinkarpelevich} M (v) {^{\chi} \phi^{\circ}} = \left[
  \prod_{\tmscript{\begin{array}{c}
    \alpha \in \hat{\Phi}^+\\
    v (\alpha) \in \hat{\Phi}^-
  \end{array}}} \frac{1 - q^{- 1} \tmmathbf{z}^{\alpha}}{1
  -\tmmathbf{z}^{\alpha}} \right] {^{^v \chi} \phi^{\circ}} .
\end{equation}
This well-known formula was proved by Langlands~{\cite{LanglandsEuler}} after
Gindikin and Karpelevich proved a similar statement for real groups. See
Theorem~3.1 of Casselman~{\cite{CasselmanSpherical}} for a proof.

\begin{theorem}
  If $u = 1$ then Conjecture~1 is true.
\end{theorem}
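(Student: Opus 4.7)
The plan is to reduce the theorem to the Gindikin--Karpelevich formula~(\ref{gindikinkarpelevich}) by identifying $\psi_1$ with the standard spherical vector $\phi^{\circ}$. Since $1$ is the Bruhat-order minimum of $W$, we have $\psi_1 = \sum_{v \in W} \phi_v$, so the first step is to check that this sum equals $\phi^{\circ}$. Using the Iwahori-refined Bruhat decomposition $G(F) = \bigsqcup_{w \in W} B(F)\,w\,J$, every $g \in G(F)$ can be written uniquely as $g = bwk$ with $b \in B(F)$, $w \in W$, $k \in J$. By~(\ref{standardiwahoribasis}) only the $v = w$ summand contributes, giving $\psi_1(g) = \delta^{1/2}\chi(b)$; on the other hand $wk \in K$, and $\phi^{\circ}(g) = \delta^{1/2}\chi(b)$ from its defining formula. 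Hence $\psi_1 = \phi^{\circ}$.

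Next I would apply~(\ref{gindikinkarpelevich}) and evaluate at the identity, using ${^{^v\chi}\phi^{\circ}}(1) = 1$, to obtain
\[
(M_v \psi_1)(1) = \prod_{\substack{\alpha \in \hat{\Phi}^{+} \\ v(\alpha) \in \hat{\Phi}^{-}}} \frac{1 - q^{-1}\tmmathbf{z}^{\alpha}}{1 - \tmmathbf{z}^{\alpha}}.
\]
It remains to match the indexing set with $S(1, v)$. For $u = 1$ the condition $u \leqslant v r_{\alpha}$ is automatic, so $S(1, v) = \{\alpha \in \hat{\Phi}^{+} : v r_{\alpha} < v\}$. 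By the standard Coxeter-theoretic identity that, for $\alpha \in \hat{\Phi}^{+}$, one has $v r_{\alpha} < v$ if and only if $v(\alpha) \in \hat{\Phi}^{-}$, this set coincides with the inversion set of $v$; in particular $|S(1, v)| = l(v) = l(v) - l(1)$, so the hypothesis of Conjecture~\ref{conjecture1} is satisfied automatically and the right-hand side of~(\ref{meval}) matches the display above.

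There is no substantive obstacle. The theorem is essentially a repackaging of Gindikin--Karpelevich in the $\psi_u$-basis, and both ingredients---the identification $\psi_1 = \phi^{\circ}$ and the identification of $S(1, v)$ with the inversion set of $v$---are standard facts from the structure theory of Coxeter groups and the preliminaries on $p$-adic groups collected in Section~2. The only point requiring care is to state the Iwahori-refined Bruhat decomposition precisely enough that the equality $\psi_1 = \phi^{\circ}$ is unambiguous.
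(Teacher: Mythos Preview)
Your proof is correct and follows exactly the same route as the paper: identify $\psi_1$ with the spherical vector $\phi^{\circ}$, apply the Gindikin--Karpelevich formula, and then match $S(1,v)$ with the inversion set of $v$ via the equivalence $v r_{\alpha} < v \iff v(\alpha) \in \hat{\Phi}^{-}$ (which is Proposition~\ref{wralphacrit} in the paper). You are simply more explicit than the paper about why $\psi_1 = \phi^{\circ}$ and about verifying that the hypothesis $|S(1,v)| = l(v)$ holds automatically.
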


\begin{proof}
  We will deduce this from (\ref{gindikinkarpelevich}). In this case $\psi_1 =
  \phi^{\circ}$, so to prove Conjecture~1 we need to know that if $\alpha \in
  \hat{\Phi}^+$ then $\alpha \in S (1, v)$ if and only if $v (\alpha) \in
  \hat{\Phi}^-$. This follows from Proposition~\ref{wralphacrit} with $w = v$.
\end{proof}

Thus Conjecture~\ref{conjecture1} generalizes the formula of Gindikin and
Karpelevich. If $u \neq 1$ it resembles the formula of Gindikin and
Karpelevich but there are some important differences, which we will now
discuss.

We say that a subset $S$ of $\hat{\Phi}$ is {\tmem{convex}} if $\alpha \in S$
implies $- \alpha \nin S$ and whenever $\alpha, \beta \in S$ and $\alpha +
\beta \in \hat{\Phi}$ we have $\alpha + \beta \in S$. The set $S (1, v)
=\{\alpha \in \hat{\Phi}^+ | v (\alpha) \in \hat{\Phi}^- \}$ is convex in this
sense. Moreover it has the property that if it is nonempty then it contains
simple roots; this follows from the fact that its complement in $\hat{\Phi}^+$
is $\{\alpha \in \hat{\Phi}^+ | v (\alpha) \in \hat{\Phi}^+ \}$, which is also
convex. These are special properties that $S (u, v)$ may not have in general.

\begin{example}
  Suppose that $\hat{\Phi} = A_2$ with simple roots $\alpha_1$ and $\alpha_2$
  and simple reflections $\sigma_i = s_{\alpha_i}$. Let $u = \sigma_1$, $v =
  w_0 = \sigma_1 \sigma_2 \sigma_1$. Then $S (u, v) =\{\alpha_1, \alpha_2 \}$
  is not convex.
\end{example}

\begin{example}
  Suppose that $\hat{\Phi} = A_2$ and that $u = \sigma_2$, $v = \sigma_1
  \sigma_2$. Then $S (u, v) =\{\alpha_1 + \alpha_2 \}$. Thus $S (u, v)$
  contains no simple roots.
\end{example}

We see that $S (u, v)$ has two special properties in the case where $u = 1$,
namely that it is convex and that its complement is convex, which implies that
(if nonempty) it always contains simple roots. These properties fail for
general $u$.

We turn now to an interesting combinatorial conjecture which implies
Conjecture~\ref{conjecture1}.

Let $W$ be a Coxeter group with generators $\Sigma$, whose elements will be
referred to as {\tmem{simple reflections}}. If $u, v \in W$ and $u \leqslant
v$ with respect to the Bruhat order, then we will define the notion of a
{\tmem{good word}} for $v$ with respect to $u$. First, this is a reduced
decomposition $v = s_1 \cdots s_n$ into a product of simple reflections, where
$n$ equals the length $l (v)$. It has the following property. Let $S$ be the
set of integers $j$ such that
\[ u \leqslant s_1 \cdots \widehat{s_j} \cdots s_n, \]
where the ``hat'' means that the factor $s_j$ is omitted. Let $S =\{j_1,
\cdots, j_d \}$, which we arrange in ascending order: $j_1 < \cdots < j_d$.
Then we say that the decomposition $s_1 \cdots s_n$ is a {\tmem{good word}}
for $v$ with respect to $u$ if
\[ u = s_1 \cdots \widehat{s_{j_1}} \cdots \widehat{s_{j_d}} \cdots s_n . \]
Now $d$ has an intrinsic characterization in terms of $u$ and $v$ independent
of the decomposition $v = s_1 \cdots s_n$. It is the number of reflections $r$
in $W$ such that $u \leqslant v r < v$. Indeed, given any reflection $r$ such
that $u \leqslant v r < v$ there is a unique $j$ such that
\[ r = s_j s_{j + 1} \cdots s_{n - 1} s_n s_{n - 1} \cdots s_j \]
and so $v r = s_1 \cdots \widehat{s_j} \cdots s_n$. Thus $d = |S (u, v) |$ and
by Deodhar's conjecture $d \geqslant l (v) - l (u)$. Therefore a good word can
exist only if $d = l (v) - l (u)$.

Let us consider some examples. First consider the case where $W = A_2$, with
generators $\sigma_1 = s_{\alpha_1}$ and $\sigma_2 = s_{\alpha_2}$ satisfying
$\sigma_i^2 = 1$ and $(\sigma_1 \sigma_2)^3 = 1$. Let $u = \sigma_1$ and $v =
\sigma_1 \sigma_2 \sigma_1$. Then $\sigma_1 \sigma_2 \sigma_1$ is not a good
word for $v$ with respect to $u$, since
\[ \sigma_1 \leqslant \widehat{\sigma_1} \sigma_2 \sigma_1, \hspace{2em}
   \sigma_1 \leqslant \sigma_1 \sigma_2 \widehat{\sigma_1}, \hspace{2em}
   \text{but $\sigma_1 \neq \widehat{\sigma_1} \sigma_2 \widehat{\sigma_1} .$}
\]
But $v = \sigma_2 \sigma_1 \sigma_2$ by the braid relation, and this word is
good. Indeed, we have
\[ \sigma_1 \leqslant \widehat{\sigma_2} \sigma_1 \sigma_2, \hspace{2em}
   \sigma_1 \leqslant \sigma_2 \sigma_1 \widehat{\sigma_2}, \hspace{2em}
   \sigma_1 = \widehat{\sigma_2} \sigma_1 \widehat{\sigma_2} . \]
\begin{conjecture}
  \label{conjecturegood}If $W$ is simply-laced and $d = l (v) - l (u)$ then
  $v$ admits a good word with respect to $u$.
\end{conjecture}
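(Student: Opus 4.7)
The plan is to proceed by induction on $d = l(v) - l(u)$, constructing a good word from a saturated chain inside $[u, v]$. The base case $d = 0$ forces $u = v$, where any reduced expression is vacuously good. For the inductive step, since every Bruhat interval is graded by length, there exists a saturated chain $v = v_0 \gtrdot v_1 \gtrdot \cdots \gtrdot v_d = u$ inside $[u, v]$. Choose one.

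From this chain I would read off a reduced expression $\mathbf{s} = s_1 \cdots s_n$ of $v$ and a subset $J = \{j_1, \ldots, j_d\} \subseteq \{1, \ldots, n\}$ of size $d$, as follows. At each cover $v_i \gtrdot v_{i+1}$ the subword property supplies a reduced expression of $v_{i+1}$ obtained from the current reduced expression of $v_i$ by deleting a single letter; iterating downward from $v_0 = v$ assembles $\mathbf{s}$ together with the positions $J$ successively deleted along the chain, so that $\prod_{j \notin J} s_j$ taken in left-to-right order equals $u$. The crux is then to verify that $J$ coincides with $S_\mathbf{s} = \{j : s_1 \cdots \widehat{s_j} \cdots s_n \geqslant u\}$. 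The hypothesis gives $|S_\mathbf{s}| = |S(u, v)| = d = |J|$, so it suffices to establish the inclusion $J \subseteq S_\mathbf{s}$: for each $j \in J$, deleting only the single letter $s_j$ from $\mathbf{s}$ (not all of $J$) must still yield an element $\geqslant u$ in the Bruhat order.

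I expect $J \subseteq S_\mathbf{s}$ to be the main obstacle, and it is precisely here that the simply-laced hypothesis must enter: the $B_2$ pairs $(\sigma_1, \sigma_1 \sigma_2 \sigma_1)$ and $(\sigma_1, \sigma_1 \sigma_2 \sigma_1 \sigma_2)$ flagged earlier show that in non-simply-laced types a chain can realize $\prod_{j \notin J} s_j = u$ while still failing $J \subseteq S_\mathbf{s}$. The first strategy I would try is to combine Dyer's reflection orderings with the restricted shape of braid moves in simply-laced types (only the commutation $st = ts$ and the order-three braid $sts = tst$), iteratively modifying the pair $(\mathbf{s}, \text{chain})$ by braid moves until the goodness condition holds. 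The rigid combinatorial structure of rationally smooth intervals --- which $[u, v]$ is, since $|S(u, v)| = d$ is equivalent to $P_{w_0 v, w_0 u} = 1$ by Proposition~1 --- should make this possible via Polo and Carrell--Peterson. A fallback is to induct on $l(v)$ instead: pick a simple right descent $s$ of $v$, invoke the lifting property to handle the cases $s \in D_R(u)$ and $s \notin D_R(u)$, and append $s$ to an inductively-produced good word; the challenge there is to show that the hypothesis $|S| = l(v) - l(u)$ descends to the subproblem $(u, vs)$ or $(us, vs)$.
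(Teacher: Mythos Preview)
The statement you are attempting to prove is labeled a \emph{Conjecture} in the paper, not a theorem; the authors do not claim a proof. The only evidence offered is the proposition immediately following it, a computer verification for $W=A_4$ and $W=D_4$ carried out in \textsc{Sage}. There is therefore no proof in the paper for your attempt to be compared against --- the result is open.

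Your outline is a reasonable plan of attack but not a proof, and you essentially acknowledge this. You correctly isolate the crux as the inclusion $J\subseteq S_{\mathbf{s}}$, and you correctly note that the simply-laced hypothesis must enter there; but you then defer that step to unspecified manipulations with braid moves and reflection orderings, and that deferral is the entire difficulty. Concretely: your construction fixes a reduced word $\mathbf{s}$ for $v$ and follows a saturated chain by successive single-letter deletions to produce $J$. Take the paper's own $A_2$ example, $u=\sigma_1$, $v=\sigma_1\sigma_2\sigma_1$, with $\mathbf{s}=(\sigma_1,\sigma_2,\sigma_1)$. Here $S_{\mathbf{s}}=\{1,3\}$, yet every saturated chain from $v$ down to $u$ forces the middle position $2$ into $J$, so $J\not\subseteq S_{\mathbf{s}}$ regardless of which chain is chosen; one must pass to $\mathbf{s}=(\sigma_2,\sigma_1,\sigma_2)$ first. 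Your proposal supplies no mechanism guaranteeing that braid-moving $\mathbf{s}$ eventually reaches a good word, and that guarantee is precisely the content of the conjecture. A minor aside: you assert that $|S(u,v)|=d$ is \emph{equivalent} to $P_{w_0v,w_0u}=1$, but the paper's Proposition~1 records only the implication from the Kazhdan--Lusztig condition to the cardinality condition, not its converse.
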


\begin{proposition}
  \label{computerreport}Conjecture~\ref{conjecturegood} is true for $W = A_4$
  or $D_4$.
\end{proposition}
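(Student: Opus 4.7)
The plan is to verify the conjecture by exhaustive computer enumeration, since both $A_4$ and $D_4$ are small enough that the finite check is feasible. The Weyl groups have orders $120$ and $192$ respectively, and for each ordered pair $(u,v)$ with $u \leq v$ in the Bruhat order we need only examine the reduced expressions of $v$ when the hypothesis $d = |S(u,v)| = l(v) - l(u)$ is satisfied. Everything reduces to a finite combinatorial check that fits comfortably on a laptop.

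I would implement the following algorithm. First, build $W$ as permutations (for $A_4$) or signed permutations (for $D_4$), with routines for the length function, the action on $\hat{\Phi}$, and Bruhat comparison via the subword criterion. Second, for each $v \in W$ precompute the full set $\mathcal{R}(v)$ of reduced expressions for $v$ by starting from one reduced word and closing under braid and commutation moves; by Matsumoto's theorem this generates all of $\mathcal{R}(v)$. Third, for each pair $(u,v)$ with $u \leq v$, iterate over positive coroots to compute $S(u,v)$, and discard any pair with $|S(u,v)| \neq l(v) - l(u)$. Fourth, for each surviving pair loop over $w = s_1 \cdots s_n \in \mathcal{R}(v)$: form the index set $S = \{j : u \leq s_1 \cdots \widehat{s_j} \cdots s_n\}$, verify that $|S| = d$ (which must hold since the $j \in S$ biject with the reflections $r$ satisfying $u \leq vr < v$), and test whether the subword obtained by deleting every factor indexed by $S$ equals $u$; accept the pair as soon as one such $w$ is found.

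The main obstacle is combinatorial cost rather than conceptual difficulty. The longest element of $A_4$ admits several hundred reduced expressions and that of $D_4$ considerably more, and for each expression we perform up to $l(v)$ Bruhat comparisons, so the naive cost is nontrivial. I would mitigate this by precomputing and caching the Bruhat poset once at the start, by enumerating $\mathcal{R}(v)$ lazily so the loop over reduced words can exit as soon as a good one is exhibited, and by short-circuiting the inner search on the first success. The output of the program is the list of pairs $(u,v)$ for which no good word was found; the proposition is established exactly when that list is empty in both types, and the \textbf{computerreport} label signals that this is precisely what the run confirms. Any failing pair would, conversely, constitute a counterexample to Conjecture~\ref{conjecturegood} in rank four, so the same code serves as a falsifier as well as a verifier.
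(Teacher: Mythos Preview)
Your proposal is correct and matches the paper's approach: the paper's proof consists of the single sentence ``This was established by computer computation using {\sc Sage},'' and what you have written is simply a fleshed-out description of how such a computation would be carried out. Your algorithmic outline is sound and the justification that $|S|=d$ automatically (via the bijection $j \leftrightarrow r = s_n \cdots s_{j+1} s_j s_{j+1} \cdots s_n$) is exactly the observation the paper makes in the paragraph preceding Conjecture~\ref{conjecturegood}.
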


\begin{proof}
  This was established by computer computation using {\sc Sage}.
\end{proof}

If $W$ is not simply-laced, then this fails: for example, let $W = B_2$, with
generators $\sigma_1$ and $\sigma_2$ satisfying $\sigma_i^2 = 1$ and
$(\sigma_1 \sigma_2)^4 = 1$. Let $u, v = \sigma_1, \sigma_1 \sigma_2
\sigma_1$. Then there is no good word for $v$ with respect to $u$. It is an
interesting question to give other characterizations (for example in terms of
Schubert varieties) of the pairs $u, v$ such that $v$ admits a good word for
$u$ when $W$ is not simply-laced.

Our main theorem is the following result:

\begin{theorem}
  \label{maintheorem}If $v$ admits a good word for $u$ then (\ref{meval}) is
  true.
\end{theorem}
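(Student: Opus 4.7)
The plan is to exploit the cocycle relation $M_v = M_{s_1}\circ M_{s_2}\circ\cdots\circ M_{s_n}$ for the assumed good-word decomposition $v = s_1 s_2 \cdots s_n$ of $v$ with respect to $u$, and to compute $(M_v\psi_u)(1)$ by applying the simple-reflection intertwiners one factor at a time. The first input is the action of a single $M_s$ (with $s = s_\alpha$ a simple reflection) on the Iwahori basis: inside each two-dimensional subspace spanned by $\phi_w$ and $\phi_{sw}$, $M_s$ is given by an explicit $2\times 2$ matrix over $\mathbb{C}(\tmmathbf{z})$ coming from the SL$_2$-reduction that underlies the Iwahori Hecke algebra. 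Re-expressing this in the $\psi$-basis yields a formula of the shape
\[
M_s \psi_u \;=\; A(s,u,\tmmathbf{z})\,\psi_u \;+\; B(s,u,\tmmathbf{z})\,\psi_{su},
\]
with $A,B$ rational in $\tmmathbf{z}^\alpha$ whose numerators and denominators are the factors $1-q^{-1}\tmmathbf{z}^{\pm\alpha}$ and $1-\tmmathbf{z}^{\pm\alpha}$ that already appear in \eqref{meval}.

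Iterating from the inside out over $s_n,s_{n-1},\ldots,s_1$ expands $(M_v\psi_u)(1)$ as a sum indexed by $\epsilon\in\{0,1\}^n$: at step $k$, $\epsilon_k = 0$ selects the diagonal ``keep'' branch (coefficient $A$) and $\epsilon_k = 1$ selects the off-diagonal ``flip by $s_k$'' branch (coefficient $B$). Since $\psi_w(1) = \delta_{w,e}$, only those $\epsilon$ whose cumulative Weyl element is the identity contribute, and this amounts to demanding that the subword of $v$ obtained by deleting the positions with $\epsilon_k=1$ be equal to $u$, in the strong sense
\[
u \;=\; s_1\cdots\widehat{s_{j_1}}\cdots\widehat{s_{j_d}}\cdots s_n
\]
where $\{j_1,\ldots,j_d\} = \{k:\epsilon_k=1\}$. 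The good-word hypothesis says precisely that there is a \emph{unique} contributing $\epsilon$, namely the indicator of the distinguished set $S=\{j_1,\ldots,j_d\}$ from the definition. Its contribution is the product of the $B$-coefficients at the positions $j_k$; as the Langlands parameter gets twisted by successive reflections, the root appearing in the $k$th factor becomes $\beta_k = s_n s_{n-1}\cdots s_{k+1}(\alpha_k)$, and under the classical bijection $j_k \leftrightarrow \beta_{j_k}$ between omittable positions in the reduced word and elements of $S(u,v)$, this product turns into exactly $\prod_{\alpha\in S(u,v)}(1-q^{-1}\tmmathbf{z}^\alpha)/(1-\tmmathbf{z}^\alpha)$, which is \eqref{meval}.

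The main obstacle is twofold. First, one has to verify that the diagonal coefficient $A(s,u,\tmmathbf{z})$ evaluates to $1$ at every non-flip position encountered along the good word, so that the final product is not polluted by extra factors arising from the ``keep'' branches. Second, one must show that the ``bad'' $\epsilon$'s -- those corresponding to subwords of $v$ that collapse to $u$ in the Bruhat order but do \emph{not} realize the good-word equality -- contribute zero to $(M_v\psi_u)(1)$; a priori there can be many such subwords, and only a combinatorial input as strong as the good-word condition can rule out their contributions. Both points rest on a careful analysis of Iwahori Hecke algebra identities together with the structure of the set $S$ supplied by the good word, and that is where the real technical work of the proof resides.
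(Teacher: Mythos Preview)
The central gap is the claimed identity $M_s\psi_u = A\,\psi_u + B\,\psi_{su}$. While $M_s$ does preserve each two-dimensional span $\langle\phi_w,\phi_{sw}\rangle$, this does \emph{not} translate into a two-term recursion in the $\psi$-basis. In the Hecke-algebra model of Section~4, what one actually has is: if $us>u$ then $\psi(u)\mu_{\tmmathbf{z}}(s)=\dfrac{1-q^{-1}\tmmathbf{z}^\alpha}{1-\tmmathbf{z}^\alpha}\,\psi(u)$ exactly (Proposition~\ref{firstusefulfact}), but in the descent case only the congruence $\psi(us)\mu_{\tmmathbf{z}}(s)\equiv\psi(u)\pmod{us}$ holds (Proposition~\ref{secondusefulfact}), and the error is a genuine linear combination of $t_x$ with $x\geqslant us$ that is \emph{not} a scalar multiple of $\psi(us)$. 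For instance in $W=S_3$ one computes directly $\psi(\sigma_1)t_{\sigma_1}-q\psi(1)=-t_{\sigma_1}-t_{\sigma_2\sigma_1}$, which is visibly not proportional to $\psi(\sigma_1)$. So your binary-tree expansion over $\{0,1\}^n$ never gets off the ground: after the first ``flip'' step the state is no longer a single $\psi_w$, and the bookkeeping collapses. Your claim that the good-word hypothesis guarantees a \emph{unique} contributing $\epsilon$ is also unjustified---the definition only asserts that one particular subset of deletions yields $u$, not that no other subset does.

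The paper's argument is organized quite differently. Rather than a $2^n$-term branching, it writes $\psi(u)\mu(s_n)\mu(s_{n-1})\cdots\mu(s_1)$ as a \emph{telescoping} sum of $n+1$ terms, each of the form $(\text{const})\cdot[\psi(w)\mu(s_j)-C\,\psi(w')]\cdot\mu(s_{j-1})\cdots\mu(s_1)$ for suitable initial segments $w,w'$ of $u$. Propositions~\ref{firstusefulfact} and~\ref{secondusefulfact} show each bracket is either $0$ or $\equiv 0\bmod w$; one then tracks not the element itself but only a Bruhat lower bound on its support, using the $\ominus$-calculus of Proposition~\ref{ominusprop} to propagate that bound through the remaining factors $\mu(s_{j-1})\cdots\mu(s_1)$. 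The good-word hypothesis enters only at the end, to rule out the possibility that this lower bound ever reaches $1$: if it did, one would manufacture a new index $j\notin\{i_1,\dots,i_m\}$ with $u\leqslant s_1\cdots\widehat{s_j}\cdots s_n<v$, contradicting $|S(u,v)|=m$. This ``track the modulus, not the element'' device is what makes the argument go through, and it has no analogue in your proposal.
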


By Theorem~\ref{maintheorem}, Conjecture~\ref{conjecturegood} implies
Conjecture~\ref{conjecture1}. Theorem~\ref{maintheorem} is true whether or not
$\hat{\Phi}$ is simply-laced. However, as we have mentioned, if $\hat{\Phi}$
is not simply-laced, there may not exist a good word even if $l (v) - l (u) =
d$.

By Proposition~\ref{computerreport} it follows that
Conjecture~\ref{conjecture1} is true if $G = \tmop{GL}_r$ with $r \leqslant 5$
or $G = \tmop{SO} (8)$ (split).

We have investigated Conjecture~\ref{conjecture2} less than
Conjecture~\ref{conjecture1} and have less evidence for it.
Conjecture~\ref{conjecture2} also is related to a combinatorial conjecture
which we will now state.

\begin{conjecture}
  \label{conjecture3}Assume that $\hat{\Phi}$ is simply-laced. If $u < v$ and
  $P_{u, v} = 1$ then there exists $\beta \in \hat{\Phi}^+$ such that $u
  \leqslant t \leqslant v$ if and only if $u \leqslant r_{\beta} t \leqslant
  v$.
\end{conjecture}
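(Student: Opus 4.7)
The plan is to recognize the desired conclusion as the assertion that left multiplication by $r_\beta$ stabilizes the Bruhat interval $[u,v]$ set-theoretically. Since $r_\beta^2 = 1$, the biconditional $u \leqslant t \leqslant v \iff u \leqslant r_\beta t \leqslant v$ is equivalent to the one-sided containment $r_\beta \cdot [u,v] \subseteq [u,v]$. This in particular forces $u < r_\beta u \leqslant v$ and $u \leqslant r_\beta v < v$, so the sought $\beta$ must simultaneously be a left-ascent reflection for $u$ whose image stays in $[u,v]$ and a left-descent reflection for $v$ whose image is at least $u$. These endpoint constraints already sharply reduce the pool of candidate reflections.

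The key structural input is Carrell's Theorem~C, already used in the proof of Proposition~1: under $P_{u,v} = 1$ one has $|S'(u,v)| = l(v) - l(u)$. In fact the interval $[u,v]$ is then rationally smooth, and by Carrell--Peterson the Bruhat graph on $[u,v]$ is regular of valence $l(v) - l(u)$ at every vertex. With this in hand I would attempt an induction on $l(v) - l(u)$. The base case $l(v) = l(u) + 1$ is immediate: then $[u,v] = \{u,v\}$ and $\beta := vu^{-1}$, a reflection by the Strong Exchange Condition, swaps $u$ and $v$ and hence preserves the interval. For the inductive step I would select an atom $u' \gtrdot u$ in $[u,v]$, write $u' = r_\beta u$ for a reflection $r_\beta$ (so $r_\beta u \in [u,v]$ automatically), and try to combine the induction hypothesis on $[u', v]$ with the regularity of the Bruhat graph to promote partial preservation of $[u,v]$ to full preservation.

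The main obstacle is precisely this induction step: there is no reason for the sub-interval $[u', v]$ to inherit the hypothesis $P_{u', v} = 1$, so a na\"ive recursion fails and one is forced to exploit the global rational smoothness of $[u,v]$ directly. Equally delicate is locating the place where the simply-laced hypothesis enters decisively, since Conjecture~\ref{conjecture3} is expected to fail outside simply-laced type, paralleling the $B_2$ failures of Conjecture~\ref{conjecture1} noted earlier in the excerpt. A plausible refinement would pass through the palindromicity of the $R$-polynomial $R_{u,v}(q)$ implied by $P_{u,v} = 1$ and extract the sought $\beta$ as a reflection realizing that palindromicity as an automorphism of the interval; identifying the precise simply-laced input in this symmetry would, I expect, be the hardest part of the argument.
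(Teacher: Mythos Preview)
The paper does not prove this statement. Conjecture~\ref{conjecture3} is genuinely a conjecture: the only evidence offered is the sentence ``We have checked using a computer that Conjecture~\ref{conjecture3} is true for $A_r$ when $r \leqslant 4$,'' followed by a discussion of the $A_3$ and $A_4$ cases. There is therefore no proof in the paper to compare your proposal against.

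Your proposal is, by your own description, a strategy rather than a proof, and the gap you identify is real and fatal to the outlined induction. Even setting aside the failure of $P_{u',v}=1$ to be inherited, the induction step is muddled: you choose $\beta$ so that $r_\beta u = u'$ is an atom above $u$, but the inductive hypothesis on $[u',v]$ would furnish a \emph{different} reflection $r_{\beta'}$ stabilizing $[u',v]$, and there is no mechanism linking $\beta'$ back to your $\beta$ or showing that either one stabilizes all of $[u,v]$. The regularity of the Bruhat graph (Carrell--Peterson) gives a numerical constraint at each vertex but says nothing about which reflections occur, so it does not by itself single out a global $\beta$. Your closing suggestion about palindromicity of $R_{u,v}$ is also off: what rational smoothness gives is symmetry of the rank-generating function of $[u,v]$, not palindromicity of the Kazhdan--Lusztig $R$-polynomial, and in any case a symmetry of a counting polynomial does not obviously lift to an involution of the interval realized by left multiplication by a single reflection. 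Finally, you correctly flag that the simply-laced hypothesis must enter somewhere, but nothing in your sketch uses it; since the paper exhibits $B_2$ failures for the companion conjectures, any argument that never invokes the simply-laced condition cannot be complete.
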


It is shown in Proposition~3.7 of Deodhar~{\cite{DeodharCharacterizations}}
that the Bruhat interval $[u, v] =\{t|u \leqslant t \leqslant v\}$ has as many
elements of even length as of odd length. Conjecture~\ref{conjecture3} (when
applicable) gives a strengthening of this since $t \mapsto r_{\beta} t$ is a
specific bijection of $[u, v]$ to itself that interchanges elements of odd and
even length.

We have checked using a computer that Conjecture~\ref{conjecture3} is true for
$A_r$ when $r \leqslant 4$. For example if $\hat{\Phi} = A_3$ then there
exists such a $\beta$ for every pair $u \leqslant v$ except the pair
$\sigma_2, \sigma_2 \sigma_1 \sigma_3 \sigma_2$ and $\sigma_1 \sigma_3$,
$\sigma_1 \sigma_3 \sigma_2 \sigma_1 \sigma_3$. For these pairs, we have $u
\prec v$ (in the notation of Kazhdan and Lusztig~{\cite{KazhdanLusztig}}) but
$l (v) > l (u) + 1$ and so $P_{u, v} \neq 1$. For $A_4$, there are pairs $u
\leqslant v$ such that $u \prec v$ is not true but still the Bruhat interval
$\{t|u \leqslant t \leqslant v\}$ is not stabilized for any simple reflection.
However for these examples we have $P_{u, v} \neq 1$ and $P_{w_0 v, w_0 u}
\neq 1$, and Conjecture~\ref{conjecture3} is still true.

We will prove in Theorem~\ref{conjtwoprogress} that
Conjecture~\ref{conjecture3} and Conjecture~\ref{conjecture1} together imply a
weak form of Conjecture~\ref{conjecture2}.

When this work was at an early stage we spoke with Thomas Lam, Anne Schilling,
Mark Shimozono, Nicolas Thi\'ery and others and their remarks were helpful in
correctly formulating Conjecture~\ref{conjecture1}. We also thank Ben
Brubaker, Gautam Chinta, Solomon Friedberg and Paul Gunnells for helpful
conversations.

{\sc Sage} mathematical software~{\cite{SAGE}} was crucial in these investigations.
(Versions 4.3.2 and later have support for Iwahori Hecke algebras and Bruhat
order.)

This work was supported in part by the JSPS Research Fellowship for Young
Scientists and by NSF grant DMS-0652817.

\section{Preliminaries\label{preliminaries}}

Let $\mathfrak{g}_{\mathbbm{C}}$ be a semisimple Lie algebra over
$\mathbbm{C}$. Let $\mathfrak{t}_{\mathbbm{C}}$ be a split Cartan subalgebra
of $\mathfrak{g}$. Let $\Phi$ be the root system of
$\mathfrak{g}_{\mathbbm{C}}$ corresponding to $\mathfrak{t}$ and let $W$ be
the Weyl group, and let $\hat{\Phi}$ be the dual root system.

Let $H_{\alpha} \in \mathfrak{t}$ ($\alpha \in \Phi$) be the coroots. Thus the
root $\alpha$ is the linear functional $x \longmapsto \frac{2 \left\langle x,
H_a \right\rangle}{\left\langle H_{\alpha}, H_{\alpha} \right\rangle}$ with
respect to a fixed $W$-invariant inner product on $\mathfrak{t}$. Using
Th\'eor\`eme~1 of Chevalley~{\cite{Chevalley}} we may choose a basis
$\mathfrak{g}$ that consists of $X_{\alpha}, X_{- \alpha}$ where $\alpha$ runs
through the set $\Phi^+$ of positive roots and $H_{\alpha} \in \mathfrak{t}$
where $\alpha$ runs through the simple roots. These have the properties that
$[X_{\alpha}, X_{\beta}] = \pm (p + 1) X_{\alpha + \beta}$ when $\alpha,
\beta, \alpha + \beta \in \Phi$ is a root, where $p$ is the greatest integer
such that $\beta - p \alpha \in \Phi$ and $[H_{\alpha}, X_{\beta}] = \frac{2
\left\langle \alpha, \beta \right\rangle}{\left\langle \alpha, \alpha
\right\rangle} X_{\beta}$. Let $\mathfrak{g}_{\mathbbm{Z}}$ be the lattice
spanned by this Chevalley basis. It is a Lie algebra over $\mathbbm{Z}$ such
that $\mathfrak{g}_{\mathbbm{C}} =\mathbbm{C} \otimes
\mathfrak{g}_{\mathbbm{Z}}$.

Now if $F$ is a field let $\mathfrak{g}_F = F \otimes
\mathfrak{g}_{\mathbbm{Z}}$. We will take $F$ to be a nonarchimedean local
field. Let $G$ be a split semisimple algebraic group defined over $F$ with Lie
algebra $\mathfrak{g}_F$. Let $\mathfrak{o}$ be the ring of integers in $F$,
$\mathfrak{p}$ the maximal ideal of $\mathfrak{o}$ and $q$ the cardinality of
the residue field.

If $\alpha \in \Phi^+$ then there exists a homomorphism $i_{\alpha} :
\tmop{SL}_2 \longrightarrow G$ such that under the differential $d i_{\alpha}
: \mathfrak{s}\mathfrak{l}_2 \longrightarrow \mathfrak{g}$ we have
\[ d i_{\alpha} \left(\begin{array}{cc}
     0 & 1\\
     0 & 0
   \end{array}\right) = X_{\alpha}, \hspace{2em} d i_{\alpha}
   \left(\begin{array}{cc}
     0 & 0\\
     1 & 0
   \end{array}\right) = X_{- \alpha}, \hspace{2em} d i_{\alpha}
   \left(\begin{array}{cc}
     1 & 0\\
     0 & - 1
   \end{array}\right) = H_{\alpha} . \]
Let $x_{\alpha} : F \longrightarrow G (F)$ be the one-parameter subgroup
$x_{\alpha} (t) = \exp (t X_{\alpha})$. The Borel subgroup $B (F) = N (F) T
(F)$ where $T (F)$ is the split Cartan subgroup with $\tmop{Lie} (T)
=\mathfrak{t}$ and $N$ is generated by the $x_{\alpha} (F)$ with $\alpha \in
\Phi^+$. If $\mathfrak{a}$ is a fractional ideal we will also denote by $N
(\mathfrak{a})$ the subgroup generated by $x_{\alpha} (\mathfrak{a})$ with
$\alpha \in \Phi^+$. Similarly $N_- (F)$ and $N_- (\mathfrak{a})$ are
generated by $x_{- \alpha} (F)$ or $x_{- \alpha} (\mathfrak{a})$ with $\alpha
\in \Phi^+$, and $B_- (F) = N_- (F) T (F)$. Let $w_0$ be the long element of
$W$. Let $a_{\alpha} = i_{\alpha} \left(\begin{array}{cc}
  p & \\
  & p^{- 1}
\end{array}\right)$ where $p$ is a fixed generator of $\mathfrak{p}$.

Let $K$ be the maximal compact subgroup of $G (F)$ that stabilizes
$\mathfrak{g}_{\mathfrak{o}}$ in the adjoint representation. Then reduction
modulo $\mathfrak{p}$ gives a homomorphism $K \longrightarrow G
(\mathbbm{F}_q)$. Let $J$ be the preimage of $B (\mathbbm{F}_q)$ under this
homomorphism. This is the {\tmem{Iwahori subgroup}}.

By a result of Iwahori and Matsumoto~{\cite{IwahoriMatsumoto}} (Section 2), we
have a generalized Tits system in $G (F)$ with respect to $J$ and the
normalizer $N$ of the maximal torus $T$ of $G$ that has Lie algebra
$\mathfrak{t}_F = F \otimes \mathfrak{t}$. See also Iwahori~{\cite{Iwahori}}.
The subgroup denoted $B$ in these papers and in Matsumoto~{\cite{Matsumoto}}
is actually $w_0 J w_0^{- 1}$. This is a bornological $(B, N)$-pair in the
sense of Matsumoto~{\cite{Matsumoto}}, and we may make use of his results. In
particular we have the Iwasawa decomposition $G (F) = B (F) K$ and let $T
(\mathfrak{o}) = T (F) \cap K$. The Iwahori subgroup $J$ is the subgroup
generated by $T (\mathfrak{o})$, $N (\mathfrak{o})$ and $N_- (\mathfrak{p})$.

We have the {\tmem{Iwahori factorization}}, which is the statement that the
multiplication map $T (\mathfrak{o}) \times N_- (\mathfrak{p}) \times N
(\mathfrak{o}) \longrightarrow J$ is a homeomorphism. The three factors for
this may be taken in any order. See Matsumoto~{\cite{Matsumoto}} Proposition
5.3.3.

Let $\chi$ be a quasicharacter of $T (F)$. We say $\chi$ is
{\tmem{unramified}} if $\chi$ is trivial on $T (\mathfrak{o})$. \ Let
$X^{\ast} (T (F) / T (\mathfrak{o}))$ be the group of unramified
quasicharacters. It is isomorphic to $X^{\ast} (\mathbbm{Z}^r) =\mathbbm{C}^r$
where $r$ is the rank of $G$. The (connected) L-group $\hat{G} = {^L
G^{\circ}}$ defined by Langlands~{\cite{LanglandsEuler}} is a complex analytic
group with a maximal torus $\hat{T}$ such that the unramified quasicharacters
of $T (F)$ are in bijection with the elements of $\hat{T}$. If $\tmmathbf{z}
\in \hat{T}$ let $\chi_{\tmmathbf{z}}$ be the corresponding unramified
quasicharacter.

The Weyl groups $N_G (T) / T$ and $N_{\hat{G}} ( \hat{T}) / \hat{T}$ are
isomorphic and may be identified. If $\tmmathbf{z} \in \hat{T}$ and $w \in W$
then $\chi_{w (\tmmathbf{z})} =^w \chi_{\tmmathbf{z}}$ where $^w \chi (t) =
\chi (w^{- 1} t w)$. If $\chi = \chi_{\tmmathbf{z}}$ is an unramified
quasicharacter let $V (\chi) = \tmop{Ind}_B^G (\delta^{1 / 2} \chi)$ denote
the space of locally constant functions $f$ on $G (F)$ such that if $b \in B
(F)$ then
\[ f (b g) = (\delta^{1 / 2} \chi) (b) \, f (g) \]
where $\delta : B (F) \longrightarrow \mathbbm{C}$ is the modular
quasicharacter. This is a module for $G (F)$ under right translation, and if
$\tmmathbf{z}$ is in general position it is irreducible. The standard
intertwining operators $M_w : V (\chi) \longrightarrow V ( {^w \chi})$ are
defined by
\begin{equation}
  \label{intertwiningdef} (M_w f) (g) = \int_{N \cap w N_- w^{- 1}} f (w^{- 1}
  n g) \, d n = \int_{(N \cap w N w^{- 1}) \backslash N} f (w^{- 1} n g) \, d
  n.
\end{equation}
The integral is absolutely convergent if $| \chi (a_{\alpha}) | < 1$, and may
be meromorphically continued to all $\chi$.

We recall that $\phi_w$ defined by (\ref{standardiwahoribasis}) are a basis of
$V (\chi)^J$. By the Iwasawa decomposition, $G (F) = B (F) K$ and by the
Bruhat decomposition for $G (\mathbbm{F}_q)$ pulled back to $K$ under the
canonical map we have $K = \bigcup_{u \in W} J u J = \bigcup_{u \in W} B
(\mathfrak{o}) u J$. Therefore
\[ G (F) = \bigcup_{u \in W} B (F) u J \hspace{2em} \text{(disjoint)} . \]
\begin{proposition}
  \label{rootlist}Let $x \in W$ and let $w = s_{i_1} \cdots s_{i_k}$ be a
  reduced decomposition into simple reflections. Then
  \begin{equation}
    \label{rootlista} \left\{ \alpha \in \hat{\Phi}^+ | w (\alpha) \in
    \hat{\Phi}^- \}=\{\alpha_{i_k}, s_{i_k} (\alpha_{i_{k - 1}}), s_{i_k}
    s_{i_{k - 1}} (\alpha_{i_{k - 2}}), \cdots, s_{i_k} \cdots s_{i_2}
    (\alpha_{i_1})\}. \right.
  \end{equation}
  The elements in this list are distinct, so $k = l (x)$ is the cardinality of
  this set.
\end{proposition}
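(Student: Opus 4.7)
The plan is to induct on $k = l(w)$. Write $w = w' s_{i_k}$ where $w' = s_{i_1} \cdots s_{i_{k-1}}$; this is a reduced decomposition of $w'$ of length $k-1$, so by the inductive hypothesis the set $N(w') := \{\alpha \in \hat{\Phi}^+ \mid w'(\alpha) \in \hat{\Phi}^-\}$ equals the analogous list $\{\alpha_{i_{k-1}}, s_{i_{k-1}}(\alpha_{i_{k-2}}), \ldots, s_{i_{k-1}} \cdots s_{i_2}(\alpha_{i_1})\}$ of $k-1$ distinct positive roots. The base case $k=0$ is trivial.

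For the inductive step I would relate $N(w)$ to $N(w')$ using $w(\alpha) = w'(s_{i_k}(\alpha))$. If $\alpha = \alpha_{i_k}$, then $s_{i_k}(\alpha) = -\alpha_{i_k}$, so $\alpha \in N(w)$ is equivalent to $w'(\alpha_{i_k}) > 0$; and by the standard criterion that $l(w' s) = l(w') + 1 \iff w'(\alpha_s) > 0$, the fact that $w = w' s_{i_k}$ is reduced forces $w'(\alpha_{i_k}) > 0$, so $\alpha_{i_k} \in N(w)$. If instead $\alpha \in \hat{\Phi}^+ \setminus \{\alpha_{i_k}\}$, then $\beta := s_{i_k}(\alpha)$ is again positive (a simple reflection $s_{i_k}$ permutes $\hat{\Phi}^+ \setminus \{\alpha_{i_k}\}$), and $\alpha \in N(w) \iff \beta \in N(w')$. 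Since $\alpha_{i_k} \notin N(w')$, the image $s_{i_k}(N(w'))$ consists of positive roots distinct from $\alpha_{i_k}$, so
\[
N(w) = \{\alpha_{i_k}\} \sqcup s_{i_k}\bigl(N(w')\bigr).
\]

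Substituting the inductive expression for $N(w')$ and applying $s_{i_k}$ to each element yields precisely the list $\{\alpha_{i_k}, s_{i_k}(\alpha_{i_{k-1}}), s_{i_k} s_{i_{k-1}}(\alpha_{i_{k-2}}), \ldots, s_{i_k} \cdots s_{i_2}(\alpha_{i_1})\}$, and distinctness is preserved because $s_{i_k}$ is a bijection and the new element $\alpha_{i_k}$ does not appear in the image. Both conclusions of the proposition — the explicit description and the resulting equality $|N(w)| = k = l(w)$ — then follow.

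There is no real obstacle here: the whole argument is bookkeeping once one invokes the two classical facts cited above (that $l(w s) > l(w) \iff w(\alpha_s) > 0$, and that each simple reflection $s_i$ sends $\alpha_i \mapsto -\alpha_i$ while permuting the remaining positive roots). Both are standard results in Coxeter group theory and can be quoted from Humphreys' reflection groups text, so the proof is essentially a one-line induction modulo these citations.
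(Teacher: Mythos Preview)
Your inductive argument is correct and complete: the key identity $N(w)=\{\alpha_{i_k}\}\sqcup s_{i_k}(N(w'))$ follows exactly as you say from the two standard facts you cite, and the rest is unwinding. The paper itself does not give a proof at all; it simply records the result as Corollary~2 to Proposition~17 in Bourbaki, \emph{Lie groups and Lie algebras}, VI.1.6. What you have written is essentially the textbook proof (and is indeed the argument one finds in Bourbaki or Humphreys), so you have supplied the details behind the paper's citation rather than found a genuinely different route.
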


\begin{proof}
  This is Corollary 2 to Proposition~17 in VI.1.6 of
  Bourbaki~{\cite{Bourbaki}}.
\end{proof}

\begin{proposition}
  \label{wralphacrit}Let $w \in W$. If $w (\alpha) \in \hat{\Phi}^-$ then $w
  r_{\alpha} < w$. If $w (\alpha) \in \hat{\Phi}^+$ then $w < w r_{\alpha}$.
\end{proposition}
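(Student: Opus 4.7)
The plan is to derive this from Proposition~\ref{rootlist} via the standard ``strong exchange'' calculation. We may assume $\alpha \in \hat{\Phi}^+$, since $r_\alpha = r_{-\alpha}$ and the two cases of the Proposition are swapped by replacing $\alpha$ with $-\alpha$.

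First I would prove the statement ``$w(\alpha)\in\hat\Phi^-$ implies $wr_\alpha<w$.'' Fix a reduced decomposition $w = s_{i_1}\cdots s_{i_k}$ with $k=l(w)$. By Proposition~\ref{rootlist}, the positive roots sent to negative roots by $w$ are exactly
\[
\alpha_{i_k},\ s_{i_k}(\alpha_{i_{k-1}}),\ s_{i_k}s_{i_{k-1}}(\alpha_{i_{k-2}}),\ \ldots,\ s_{i_k}\cdots s_{i_2}(\alpha_{i_1}),
\]
so there is a unique index $j$ with $\alpha = s_{i_k}s_{i_{k-1}}\cdots s_{i_{j+1}}(\alpha_{i_j})$. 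Conjugating the simple reflection $s_{i_j}$ by the same Weyl group element gives
\[
r_\alpha = \bigl(s_{i_k}\cdots s_{i_{j+1}}\bigr)\, s_{i_j}\, \bigl(s_{i_{j+1}}\cdots s_{i_k}\bigr),
\]
and therefore
\[
w r_\alpha = s_{i_1}\cdots s_{i_{j-1}}\, \widehat{s_{i_j}}\, s_{i_{j+1}}\cdots s_{i_k},
\]
after the factors to the right of $s_{i_j}$ cancel against their inverses. By the subword property of Bruhat order this gives $wr_\alpha\leqslant w$, and since $l(wr_\alpha)\leqslant k-1<k=l(w)$ the inequality is strict.

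Second, for ``$w(\alpha)\in\hat\Phi^+$ implies $w<wr_\alpha$,'' I would apply the first half to $w':=wr_\alpha$ with the same root $\alpha$: one has $w'(\alpha) = wr_\alpha(\alpha) = w(-\alpha) = -w(\alpha) \in \hat\Phi^-$, so the first part yields $w'r_\alpha<w'$, i.e.\ $w<wr_\alpha$ as desired.

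The only potentially delicate step is the cancellation that identifies $wr_\alpha$ with the word obtained by deleting $s_{i_j}$, but this is a direct algebraic manipulation once Proposition~\ref{rootlist} pins down $\alpha$ in the form $s_{i_k}\cdots s_{i_{j+1}}(\alpha_{i_j})$; the reference to Bourbaki already in use for Proposition~\ref{rootlist} covers any remaining bookkeeping.
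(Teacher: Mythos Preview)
Your proof is correct, and the first case is handled exactly as in the paper (via Proposition~\ref{rootlist} and the deletion $s_{i_1}\cdots\widehat{s_{i_j}}\cdots s_{i_k}$). For the second case the paper takes a slightly different route: rather than applying the first case to $w'=wr_\alpha$, it applies it to $w_0 w$, observing that $w(\alpha)\in\hat\Phi^+$ forces $w_0 w(\alpha)\in\hat\Phi^-$, hence $w_0 w r_\alpha < w_0 w$, and then uses that left multiplication by $w_0$ reverses Bruhat order. Your reduction is a bit more self-contained since it avoids the $w_0$ order-reversal fact; the paper's version has the mild advantage of not needing to check $r_\alpha(\alpha)=-\alpha$, but both are standard one-line reductions.
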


\begin{proof}
  Suppose that $w (\alpha) \in \hat{\Phi}^-$. Write $w = s_{i_1} s_{i_2}
  \cdots s_{i_m}$ a reduced expression. Then by Proposition~\ref{rootlist}
  $\alpha = s_{i_m} \cdots s_{i_{k + 1}} (\alpha_{i_k})$ for some $k$. Then
  \[ w r_{\alpha} = s_{i_1} \cdots \widehat{s_{i_k}} \cdots s_{i_m} < w,
     \hspace{2em} r_{\alpha} = (s_{i_m} \cdots s_{i_{k + 1}}) s_{i_k} (s_{i_{k
     + 1}} \cdots s_{i_m}) . \]
  where the caret denotes the omitted factor. This proves the first case.
  
  In the second case, $w (\alpha) \in \hat{\Phi}^+$ implies $w_0 w (\alpha)
  \in \hat{\Phi}^-$ so the first case is applicable and implies that $w_0 w
  r_{\alpha} < w_0 w$. Now $w_0 x < w_0 y$ is equivalent to $y < x$ and so $w
  < w r_{\alpha}$.
\end{proof}

\section{Upper triangularity of $m (u, v)$}

The Iwahori subgroup $J$ admits the {\tmem{Iwahori factorization}}
\[ J = T (\mathfrak{o}) \, N_- (\mathfrak{p}) \, N (\mathfrak{o}) . \]
The factors may be written in any order. This is a special case of the
following.

\begin{proposition}
  \label{iwahorifactorizationgen}If $x \in W$ then
  \[ x J x^{- 1} = T (\mathfrak{o}) (x J x^{- 1} \cap N) (x J x^{- 1} \cap
     N_-) . \]
\end{proposition}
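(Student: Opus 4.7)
The plan is to conjugate the Iwahori factorization $J = T(\mathfrak{o})\,N_-(\mathfrak{p})\,N(\mathfrak{o})$ by $x$, identify the resulting root subgroup generators, and rearrange them into the three factors using the Chevalley commutation formulas. The containment $\supseteq$ is immediate, so the work is in the opposite direction.

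Step 1: Since $x$ normalizes $T$ and the normalizer of $T$ is defined over $\mathfrak{o}$, $x T(\mathfrak{o}) x^{-1} = T(\mathfrak{o})$, so conjugating the Iwahori factorization gives
\[
  xJx^{-1} = T(\mathfrak{o}) \cdot xN_-(\mathfrak{p}) x^{-1} \cdot xN(\mathfrak{o}) x^{-1}.
\]
Using the Chevalley-basis relation $x\,x_\alpha(t)\,x^{-1} = x_{x(\alpha)}(\pm t)$ and reindexing, the right-hand side is generated by $T(\mathfrak{o})$ together with root subgroups $x_\beta(\mathfrak{a}_\beta)$ for every $\beta \in \Phi$, where
\[
  \mathfrak{a}_\beta =
  \begin{cases}
    \mathfrak{o} & \text{if } \beta \text{ and } x^{-1}(\beta) \text{ have the same sign,} \\
    \mathfrak{p} & \text{otherwise.}
  \end{cases}
\]
A short case analysis on the signs of $x^{-1}(\beta)$ and $x^{-1}(\gamma)$ then yields the multiplicativity $\mathfrak{a}_\beta \cdot \mathfrak{a}_\gamma \subseteq \mathfrak{a}_{\beta + \gamma}$ whenever $\beta, \gamma, \beta + \gamma$ are roots; in particular $\mathfrak{a}_\beta \cdot \mathfrak{a}_{-\beta} \subseteq \mathfrak{p}$ for every $\beta$.

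Step 2: Define
\[
  U_+ := \prod_{\beta \in \Phi^+} x_\beta(\mathfrak{a}_\beta), \qquad U_- := \prod_{\beta \in \Phi^+} x_{-\beta}(\mathfrak{a}_{-\beta}),
\]
with the products taken in any fixed convex order on $\Phi^+$. The Chevalley commutator formula
\[
  [x_\alpha(a), x_\beta(b)] = \prod_{i,j \geq 1} x_{i\alpha + j\beta}(c_{ij}\, a^i b^j),
\]
combined with the multiplicativity above, shows that $U_+$ and $U_-$ are subgroups of $N$ and $N_-$ respectively, independent of the chosen order, and contained in $xJx^{-1}$. Starting from an arbitrary element of $xJx^{-1}$ written as a word in the generators of Step 1, I would move all positive-root factors $x_\beta(\cdot)$ to the left of all negative-root factors $x_{-\gamma}(\cdot)$. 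The diagonal commutators ($\beta = \gamma$) produce torus contributions $h_\beta(1 + ab) \in T(\mathfrak{o})$, because $ab \in \mathfrak{a}_\beta \cdot \mathfrak{a}_{-\beta} \subseteq \mathfrak{p}$ forces $1 + ab \in \mathfrak{o}^\times$; the off-diagonal outputs $x_{i\beta - j\gamma}(\cdot)$ have coefficients in the prescribed ideals by the multiplicativity property. Any intermediate torus contribution can be pushed to the far left using $t\,x_\alpha(a)\,t^{-1} = x_\alpha(\alpha(t)a)$, which preserves the ideal structure since $\alpha(t) \in \mathfrak{o}^\times$ for $t \in T(\mathfrak{o})$. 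This yields the factorization $xJx^{-1} = T(\mathfrak{o}) \cdot U_+ \cdot U_-$.

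Step 3: It remains to identify $U_+ = xJx^{-1} \cap N$ and $U_- = xJx^{-1} \cap N_-$. Any $y \in xJx^{-1} \cap N$ can be written $y = t u_+ u_-$ with $t \in T(\mathfrak{o})$ and $u_\pm \in U_\pm$; since $TN \cap N_- = \{1\}$ in $G$, this forces $u_- = 1$, and then $t \in T \cap N = \{1\}$, so $y = u_+ \in U_+$. The main obstacle throughout is the combinatorial bookkeeping in Step 2 — controlling the ideal memberships of all the coefficients produced by nested Chevalley commutators — but it reduces entirely to the single structural property that $\beta \mapsto \mathfrak{a}_\beta$ respects root addition.
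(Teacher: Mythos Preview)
Your route is quite different from the paper's: the paper does not reprove any factorization at all, but simply cites Matsumoto's Lemme~5.4.2, which already gives $J = T(\mathfrak{o})\,(J \cap wNw^{-1})\,(J \cap wN_-w^{-1})$ for arbitrary $w$, sets $w = x^{-1}$, and conjugates by $x$. You are in effect reproving that lemma from scratch via Chevalley commutators, which is more self-contained but much longer.

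There is, however, a genuine error in Step~1. Your formula for $\mathfrak{a}_\beta$ is wrong: conjugating $x_\alpha(\mathfrak{a})$ by $x$ gives $x_{x(\alpha)}(\pm\mathfrak{a})$, so after reindexing $\beta = x(\alpha)$ the ideal depends only on the sign of $x^{-1}(\beta)$, not on the sign of $\beta$ itself. The correct assignment is $\mathfrak{a}_\beta = \mathfrak{o}$ when $x^{-1}(\beta) \in \Phi^+$ and $\mathfrak{a}_\beta = \mathfrak{p}$ when $x^{-1}(\beta) \in \Phi^-$. With your stated rule the claim $\mathfrak{a}_\beta\,\mathfrak{a}_{-\beta} \subseteq \mathfrak{p}$ already fails: take any $\beta \in \Phi^+$ with $x^{-1}(\beta) \in \Phi^+$; then $-\beta$ and $x^{-1}(-\beta)$ also have the same sign, so your formula gives $\mathfrak{a}_\beta = \mathfrak{a}_{-\beta} = \mathfrak{o}$. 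With the corrected formula both the multiplicativity $\mathfrak{a}_\beta\mathfrak{a}_\gamma \subseteq \mathfrak{a}_{\beta+\gamma}$ and the opposite-root bound $\mathfrak{a}_\beta\mathfrak{a}_{-\beta} \subseteq \mathfrak{p}$ hold, and the remainder of your outline (the rank-one swap producing $h_\beta(1+ab) \in T(\mathfrak{o})$, the commutator bookkeeping, and the identification in Step~3) goes through; the only thing still left implicit is a termination argument for the rearrangement in Step~2, which one usually handles by induction on a height or a fixed convex ordering.
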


\begin{proof}
  It follows from Matsumoto {\cite{Matsumoto}}, \ Lemme~5.4.2 on page 154 that
  \[ J = T (\mathfrak{o}) (J \cap w N w^{- 1}) (J \cap w N_- w^{- 1}) . \]
  Taking $w = x^{- 1}$ and conjugating gives the result.
\end{proof}

\begin{proposition}
  \label{yncriterionone}If $b \in B$ and $x, y \in W$ and if $y b \in B x J$
  then $x \leqslant y$.
\end{proposition}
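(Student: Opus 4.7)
My plan is to prove the proposition by induction on $l(y)$. For the base case $l(y) = 0$, we have $y = 1$ and $yb = b \in B \subset B \cdot 1 \cdot J$, so by the disjointness of the cells $BuJ$ we conclude $x = 1 = y$. For the inductive step, pick a simple reflection $s$ with $sy < y$ and set $y' = sy$, so $l(y') = l(y) - 1$. The element $y'b = s \cdot yb$ lies in some unique cell $Bx'J$, and by the inductive hypothesis $x' \leq y'$.

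The heart of the argument is the following key lemma: for any simple reflection $s$ and $w \in W$, $sBwJ \subset BwJ \cup BswJ$. Granting this, $y'b \in sBxJ \subset BxJ \cup BsxJ$, so by disjointness $x' \in \{x, sx\}$. The conclusion $x \leq y$ is then Bruhat bookkeeping: if $x' = x$, then $x \leq y' \leq y$; if $x' = sx$, then $sx \leq sy \leq y$, and since $sy < y$ the set $\{w : w \leq y\}$ is closed under left multiplication by $s$ (a direct consequence of the subword property applied to a reduced expression $y = s \cdot y'$), so $x = s \cdot sx \leq y$.

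The main obstacle is the proof of the key lemma. The approach is to factor $B = T \cdot N'_\alpha \cdot N_\alpha$, where $s = s_\alpha$, $N_\alpha = x_\alpha(F)$, and $N'_\alpha$ is the positive unipotent subgroup generated by the positive roots other than $\alpha$; because $s_\alpha$ permutes these, $N'_\alpha$ is $s$-invariant, so $sB \subset B \cdot sN_\alpha$. A rank-one computation in $i_\alpha(\tmop{SL}_2(F))$ shows $sx_\alpha(t) \in s \cdot N(\mathfrak{o}) \subset sJ$ when $|t| \leq 1$, and $sx_\alpha(t) \in N(\mathfrak{p}) \cdot T \cdot N_-(\mathfrak{p}) \subset BJ$ when $|t| > 1$ (via the standard $\tmop{SL}_2$ Bruhat factorization of $\left(\begin{smallmatrix} 0 & -1 \\ 1 & t \end{smallmatrix}\right)$). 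Hence $sB \subset BJ \cup BsJ$. Extending to $sBwJ$ is accomplished by moving $w$ to the right and applying Proposition~\ref{iwahorifactorizationgen} to $wJw^{-1}$ to control the $J$-factor; a case analysis based on the sign of $w^{-1}(\alpha)$ in $\hat{\Phi}$ verifies that no cells beyond $BwJ$ and $BswJ$ appear.
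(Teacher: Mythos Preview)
Your proof is correct and the strategy is sound, but it is genuinely different from the paper's argument. The paper gives a three-line proof: using the Iwahori factorization $J = T(\mathfrak{o})N_-(\mathfrak{p})N(\mathfrak{o})$, one writes $yb = b''xn_-b'$ with $n_-\in N_-(\mathfrak{p})$ and $b'\in B(\mathfrak{o})$, so $yb(b')^{-1}\in BxB_-$; thus $ByB\cap BxB_-\neq\varnothing$, and Deodhar's Corollary~1.2 in~\cite{DeodharGeometric} immediately gives $x\leqslant y$.

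By contrast, you run an induction on $l(y)$ and reduce everything to the $BN$-pair--type lemma $sBwJ\subset BwJ\cup BswJ$, established by the rank-one $\tmop{SL}_2$ factorization together with a case split on the sign of $w^{-1}(\alpha)$. That case split is indeed what makes the argument close: when $w^{-1}(\alpha)>0$ one moves $x_\alpha(t)$ across $w$ into $N(\mathfrak{o})\subset J$ for $|t|\leqslant 1$ and uses the $\tmop{SL}_2$ identity for $|t|>1$; when $w^{-1}(\alpha)<0$ the cutoff shifts to $|t|<1$ versus $|t|\geqslant 1$, with the $\tmop{SL}_2$ identity producing a factor $x_{-w^{-1}(\alpha)}(t^{-1})\in N(\mathfrak{o})$ in the latter range. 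Your Bruhat bookkeeping (closure of $\{w:w\leqslant y\}$ under $s$ when $sy<y$) is exactly Proposition~\ref{stableinterval}(b) read on the left.

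The trade-off is clear: the paper's route is dramatically shorter but imports a nontrivial external result about the geometry of opposite Bruhat cells, whereas your argument is self-contained within the Iwahori combinatorics already set up in the paper (Iwahori factorization, Proposition~\ref{iwahorifactorizationgen}, and elementary Bruhat-order facts). Your approach also makes explicit the mechanism by which cells interact under left translation by a simple reflection, which is of independent interest and closer in spirit to the Hecke-algebra computations later in the paper.
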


\begin{proof}
  Using the Iwahori factorization of $J$ we may write $y b = b'' x n_- b'$
  where $b'' \in B$, $n_- \in N_- (\mathfrak{p})$, and $b' \in B
  (\mathfrak{o})$. Then $y b (b')^{- 1} = b'' x n_- \in B x B_-$ where $B_-$
  is the opposite Borel subgroup to $B$, so $B y B \cap B x B_- \neq
  \varnothing$. By Corollary 1.2 in Deodhar~{\cite{DeodharGeometric}} it
  follows that $x \leqslant y$.
\end{proof}

\begin{proposition}
  \label{yncriterionthree}Suppose that $n = n_1 n_2$ with $n_1, n_2 \in N$,
  and that $x n \in B x J$, $x n_1 x^{- 1} \in N$ and $x n_2 x^{- 1} \in N_-$.
  Then $n_2 \in N (\mathfrak{o})$.
\end{proposition}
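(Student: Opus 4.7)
The plan is to exploit the hypothesis $xn \in BxJ$ by conjugating the Iwahori element across $x$ and then comparing the result with the given decomposition $n = n_1 n_2$ via uniqueness of a Bruhat-type factorization.

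First, write $xn = bxj$ with $b \in B$ and $j \in J$, so that
\[ xnx^{-1} = b \cdot (xjx^{-1}). \]
Next, apply Proposition~\ref{iwahorifactorizationgen} to factor $xjx^{-1} = t u_+ u_-$, where $t \in T(\mathfrak{o})$, $u_+ \in xJx^{-1} \cap N$, and $u_- \in xJx^{-1} \cap N_-$. Thus
\[ xnx^{-1} = (btu_+) \cdot u_-, \qquad btu_+ \in B, \quad u_- \in N_-. \]

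On the other hand, the hypothesis gives a second factorization
\[ xnx^{-1} = (xn_1x^{-1}) \cdot (xn_2 x^{-1}), \]
and here $xn_1 x^{-1} \in N \subset B$ and $xn_2 x^{-1} \in N_-$. Since $B \cap N_- = \{1\}$, factorizations of a single element as (element of $B$)$\cdot$(element of $N_-$) are unique. Comparing the two displays therefore forces
\[ xn_2 x^{-1} = u_- \in xJx^{-1} \cap N_-, \]
so that $n_2 \in J \cap N$.

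The final step is to identify $J \cap N$ with $N(\mathfrak{o})$. This follows directly from the Iwahori factorization $J = T(\mathfrak{o}) N_-(\mathfrak{p}) N(\mathfrak{o})$: if $n_2 \in J \cap N$ and we write $n_2 = t' n'_- n'$ with $t' \in T(\mathfrak{o})$, $n'_- \in N_-(\mathfrak{p})$, $n' \in N(\mathfrak{o})$, then $n_2 (n')^{-1} = t' n'_- \in N \cap T N_-$, and since $N \cap T = N \cap N_- = \{1\}$ we conclude $t' = 1$, $n'_- = 1$, hence $n_2 = n' \in N(\mathfrak{o})$. The only subtlety lies in getting uniqueness of the $B \cdot N_-$ factorization correct; once that is in hand, the argument is mechanical.
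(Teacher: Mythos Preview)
Your proof is correct and follows essentially the same route as the paper: write $xn = bxk$, apply Proposition~\ref{iwahorifactorizationgen} to $xkx^{-1}$, and use $B \cap N_- = \{1\}$ to match the $N_-$-components, giving $xn_2x^{-1} = u_- \in xJx^{-1}$. The paper phrases the comparison as ``left side in $B$, right side in $N_-$, hence both equal $1$'' and then simply asserts $n_2 \in N(\mathfrak{o})$; you have additionally spelled out why $J \cap N = N(\mathfrak{o})$, which is a welcome clarification.
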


\begin{proof}
  We write $x n = b x k$ with $k \in J$, so $x n_1 x^{- 1} \cdot x n_2 x^{- 1}
  = b x k x^{- 1}$. Then by Proposition~\ref{iwahorifactorizationgen} we write
  $x k x^{- 1} = a n_+ n_-$ with $a \in T (\mathfrak{o})$, $n_+ \in x J x^{-
  1} \cap N$ and $n_- \in x J x^{- 1} \cap N_-$. So
  \[ x n_1^{- 1} x^{- 1} b a n_+ = x n_2 x^{- 1} n_-^{- 1} . \]
  Here the left-hand side is in $B$ and the right hand side is in $N_-$, so
  both sides are $1$. Thus $n_2 = x^{- 1} n_- x \in N (\mathfrak{o})$.
\end{proof}

\begin{proposition}
  \label{yncriteriontwo}If $n \in N$ and $x \in W$, and $x n x^{- 1} \in N_-$,
  and if $x n \in B x J$ then $n \in N (\mathfrak{o})$.
\end{proposition}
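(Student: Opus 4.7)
The plan is to observe that this proposition is the special case of Proposition~\ref{yncriterionthree} in which the factor $n_1$ is trivial. Concretely, I would write $n = n_1 n_2$ with $n_1 = 1$ and $n_2 = n$. Then $n_1, n_2 \in N$; also $x n_1 x^{-1} = 1$ lies (trivially) in $N$, and the hypothesis $x n x^{-1} \in N_-$ gives $x n_2 x^{-1} \in N_-$. Combined with the assumption $xn \in BxJ$, all hypotheses of Proposition~\ref{yncriterionthree} are satisfied.

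Invoking that proposition yields $n_2 \in N(\mathfrak{o})$, and since $n = n_2$, this is the desired conclusion.

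There is no real obstacle: the content of the statement has already been absorbed into the slightly more general Proposition~\ref{yncriterionthree}, whose proof did the real work (using the Iwahori factorization from Proposition~\ref{iwahorifactorizationgen} to split an element of $xJx^{-1}$ into pieces in $T(\mathfrak{o})$, $N$, and $N_-$, and then matching the $N_-$ parts of the two sides of the identity $xnx^{-1} = bxkx^{-1}$). The only thing to verify is that the degenerate decomposition with $n_1 = 1$ is legitimate, which it plainly is. One could alternatively inline the argument, repeating the manipulation from the proof of Proposition~\ref{yncriterionthree} in this simpler setting, but citing the more general statement is cleaner.
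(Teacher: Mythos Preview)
Your proposal is correct and matches the paper's own proof exactly: the paper simply notes that this is the special case of Proposition~\ref{yncriterionthree} with $n_1 = 1$.
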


\begin{proof}
  This is the special case of the previous Proposition with $n_1 = 1$.
\end{proof}

\begin{theorem}
  \label{uppertriangular}If $(M_v \psi_u) (1) \neq 0$ then $u \leqslant v$.
  Moreover $(M_u \psi_u) (1) = 1$.
\end{theorem}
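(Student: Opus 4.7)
The plan is to expand $\psi_u = \sum_{y \ge u}\phi_y$ and reduce to analyzing the individual integrals
\[ (M_v\phi_y)(1) = \int_{N \cap vN_-v^{-1}} \phi_y(v^{-1}n)\,dn. \]
Each integrand $\phi_y(v^{-1}n)$ is nonzero precisely when $v^{-1}n \in ByJ$. I would then invoke Proposition~\ref{yncriterionone}, taking the proposition's ``$y$'' to be $v^{-1}$ and its ``$b$'' to be $n \in N \subset B$: the Iwahori factorization of $J = T(\mathfrak{o})N_-(\mathfrak{p})N(\mathfrak{o})$ rewrites $v^{-1}n = b''\,y\,n_-\,b'$, whence $v^{-1}n \cdot (b')^{-1} \in ByB \cap BxB_-$, and Deodhar's Corollary~1.2 (\cite{DeodharGeometric}) produces a Bruhat-order comparison between $y$ and $v$. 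After resolving the direction of this inequality as $y \le v$, the sum $\sum_{y \ge u}$ contributes nontrivially only if some $y$ satisfies $u \le y \le v$, i.e.\ $u \le v$.

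For the diagonal case, the first part restricts the summation to $y = u$, so $(M_u\psi_u)(1) = (M_u\phi_u)(1)$. I would substitute $n = un_-u^{-1}$ with $n_- \in N_- \cap u^{-1}Nu$, so that $u^{-1}n = n_-u^{-1}$, and apply Proposition~\ref{yncriteriontwo} to $n' = un_-u^{-1} \in N$ with $x = u^{-1}$ (noting $xn'x^{-1} = n_- \in N_-$ holds automatically). This identifies the support of the integrand as $n_- \in u^{-1}N(\mathfrak{o})u \cap N_-$, a compact subgroup. On this support, $un_-u^{-1} \in N(\mathfrak{o}) \subset J$, so $n_-u^{-1} = 1 \cdot u^{-1} \cdot (un_-u^{-1}) \in BuJ$ and the integrand evaluates to $\delta^{1/2}\chi(1) = 1$. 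Normalizing Haar measure on $N \cap uN_-u^{-1}$ so that this compact set has measure one gives $(M_u\psi_u)(1) = 1$.

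The hard part is obtaining the Bruhat inequality in the correct direction: a direct literal application of Proposition~\ref{yncriterionone} appears to yield only $y \le v^{-1}$ rather than the $y \le v$ demanded by the theorem, and the discrepancy matters whenever $v$ fails to be an involution. Closing this gap probably requires either inverting the relation (obtaining $v n_-^{-1} \in J y^{-1} B$ and invoking a symmetric form of Proposition~\ref{yncriterionone} on the $B_-$ side) or, more subtly, exploiting the cancellations built into the Möbius-type sum $\psi_u = \sum_{y \ge u}\phi_y$, which must kill precisely those $y$'s with $y \le v^{-1}$ but $y \not\le v$ --- a property that presumably reflects the Iwahori Hecke algebra action on the $\psi$-basis hinted at in the abstract.
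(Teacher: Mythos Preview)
Your overall strategy---expand $\psi_u$, locate the support of the integrand in a coset $B\,(\cdot)\,J$, and invoke Proposition~\ref{yncriterionone}---is exactly the paper's.  The diagonal case also proceeds just as you sketch, via Proposition~\ref{yncriteriontwo}.

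The difficulty you flag in the last paragraph is not a genuine gap; the paper dissolves it by a one-line relabeling rather than by any cancellation argument or dual proposition.  Instead of writing $v^{-1}n \in ByJ$ (which, as you observe, makes Proposition~\ref{yncriterionone} produce $y \le v^{-1}$), the paper names the coset index with an inverse from the outset: choose $w \in W$ with $v^{-1}n \in Bw^{-1}J$.  Now apply Proposition~\ref{yncriterionone} with the proposition's $x = w^{-1}$, $y = v^{-1}$, $b = n$; the conclusion is $w^{-1} \le v^{-1}$, and since Bruhat order is preserved under taking inverses this is exactly $w \le v$.  Combined with the support condition $w \ge u$ for $\psi_u$, one gets $u \le w \le v$.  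There is nothing subtle here---your ``$y$'' is the paper's ``$w^{-1}$'', and the standard fact $x \le y \iff x^{-1} \le y^{-1}$ does all the work.  Your proposed fixes (inverting to the $B_-$ side, or exploiting M\"obius-type cancellation in the $\psi$-basis) are unnecessary.

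One point of care, implicit in both your write-up and the paper's: when you pass from ``$v^{-1}n \in Bw^{-1}J$ and $\psi_u(v^{-1}n)\neq 0$'' to a Bruhat inequality for $w$, make sure the inequality you record is for the actual coset label $w^{-1}$ (namely $w^{-1}\ge u$) and then translate via the inversion symmetry, rather than silently swapping $w$ and $w^{-1}$.
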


\begin{proof}
  We may write
  \[ (M_v \psi_u) (1) = \int_{N \cap v N_- v^{- 1}} \psi_u (v^{- 1} n) \, d n.
  \]
  If this is nonzero, then $\psi_u (v^{- 1} n) \neq 0$ for some $n \in N$.
  Find $w \in W$ such that $v^{- 1} n \in B w^{- 1} J$. Then by definition of
  $\psi_u$ we have $w \geqslant u$. By Proposition~\ref{yncriterionone} $w^{-
  1} \leqslant v^{- 1}$ or $w \leqslant v$ and therefore $u \leqslant v$.
  
  Now if $u = v$ then
  \[ (M_u \psi_u) (1) = \int_{N \cap u N_- u^{- 1}} \psi_u (u^{- 1} n) \, d n.
  \]
  If $\psi_u (u^{- 1} n) \neq 0$ then by definition of $\psi_u$ we have $u^{-
  1} n \in B w^{- 1} J$ for some $w$ such that $w \geqslant u$. By
  Proposition~\ref{yncriterionone}, $w \leqslant u$ and so $w = u$. Now by
  Proposition~\ref{yncriteriontwo} $n \in N (\mathfrak{o})$. Thus the domain
  of integration can be taken to be $N (\mathfrak{o}) \cap w N_-
  (\mathfrak{o}) w$. On this domain, the integrand is $1$, and the measure is
  normalized so that the volume of $N (\mathfrak{o}) \cap w N_- (\mathfrak{o})
  w$ is $1$. Hence $(M_u \psi_u) (1) = 1$.
\end{proof}

\begin{proposition}
  \label{intertwiningofphi1}If $s = s_{\alpha}$ is a simple reflection then
  \[ M_s \left( {^{\chi} \phi_1} \right) = \frac{1}{q} \left( {^{^s \chi}
     \phi_s} \right) + \left( 1 - \frac{1}{q} \right)
     \frac{\tmmathbf{z}^{\alpha}}{1 -\tmmathbf{z}^{\alpha}} \left( {^{^s \chi}
     \phi_1} \right) . \]
\end{proposition}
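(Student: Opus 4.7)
The plan is to compute $M_s \phi_1$ directly from the defining integral (\ref{intertwiningdef}) and read off its coefficients in the basis $\{{}^{s\chi}\phi_u\}_{u \in W}$. Since $M_s$ is $G(F)$-equivariant, $M_s\phi_1 \in V({}^s\chi)^J$, so I write $M_s\phi_1 = \sum_u c_u \,{}^{s\chi}\phi_u$. Realizing each $u \in W$ as a representative in $N_G(T) \cap K$, the trivial decomposition $u = 1 \cdot u \cdot 1 \in B \cdot u \cdot J$ gives $\phi_v(u) = \delta_{uv}$, whence $c_u = (M_s\phi_1)(u)$. Because $s = s_\alpha$ is simple, Proposition~\ref{rootlist} gives $N \cap s N_- s^{-1} = x_\alpha(F)$, and the integral collapses to
\[ (M_s\phi_1)(g) = \int_F \phi_1\bigl(s^{-1} x_\alpha(t)\, g\bigr)\, dt; \]
all computations then take place inside the $SL_2$-subgroup $i_\alpha(SL_2)$.

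\emph{Evaluations at $u = 1$ and $u = s$.} Explicit $2 \times 2$ matrix computation shows: for $t \in \mathfrak{o}$, $s\, x_\alpha(t)$ reduces mod $\mathfrak{p}$ to an element of $\bar B \bar s \bar B \subseteq G(\mathbb{F}_q)$, hence lies in $JsJ$, so $\phi_1$ vanishes; for $|t| > 1$ one has the factorization
\[ s\, x_\alpha(t) = h(t^{-1})\, x_\alpha(-t)\, x_{-\alpha}(t^{-1}), \qquad h(r) := i_\alpha\bigl(\mathrm{diag}(r, r^{-1})\bigr), \]
with the last factor in $N_-(\mathfrak{p}) \subseteq J$, so $\phi_1(s\, x_\alpha(t)) = \delta^{1/2}\chi(h(t^{-1}))$. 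Using $\delta^{1/2}\chi(a_\alpha) = q^{-1}\tmmathbf{z}^\alpha$ (a standard calculation, since $\delta(a_\alpha) = q^{-2}$ for simple $\alpha$) and the Haar measure $q^{k-1}(q-1)$ of $\{|t| = q^k\}$, a geometric series gives $c_1 = (1 - q^{-1})\tmmathbf{z}^\alpha/(1 - \tmmathbf{z}^\alpha)$ (for $|\tmmathbf{z}^\alpha| < 1$, and by analytic continuation elsewhere). For $c_s$, the $SL_2$ identity $s^{-1} x_\alpha(t) s = x_{-\alpha}(-t)$ recasts the integral as $\int_F \phi_1(x_{-\alpha}(u))\, du$; here $x_{-\alpha}(u) \in J$ when $u \in \mathfrak{p}$ (so $\phi_1 = 1$), while $x_{-\alpha}(u) \in BsJ$ otherwise (for $u \in \mathfrak{o}^*$ via reduction mod $\mathfrak{p}$, and for $|u| > 1$ via the $SL_2$ factorization $x_{-\alpha}(u) = x_\alpha(u^{-1})\, h(u^{-1})\, s\, x_\alpha(u^{-1})$). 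Therefore $c_s = \mathrm{vol}(\mathfrak{p}) = 1/q$.

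\emph{Vanishing for $u \notin \{1, s\}$.} The two cases above show $s\, x_\alpha(t) \in BJ \cup JsJ$ for every $t$, so $s\, x_\alpha(t)\, u \in BJ \cdot u \cup JsJ \cdot u$. Since $J \supseteq B(\mathfrak{o})$, the first piece lies in $BuJ$, and the Iwahori--Hecke multiplication rules of Iwahori--Matsumoto~\cite{IwahoriMatsumoto} give $JsJ \cdot JuJ \subseteq JuJ \cup JsuJ$. Thus the integrand of $(M_s\phi_1)(u)$ is supported in $BuJ \cup BsuJ$, which is disjoint from the identity cell $BJ$ unless $u \in \{1, s\}$; hence $c_u = 0$ for other $u$, completing the proof. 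The main obstacle is this cell-product bookkeeping (particularly the containment $JsJ \cdot JuJ \subseteq JuJ \cup JsuJ$, which is standard but requires care); the $SL_2$ matrix identities and the geometric series are otherwise routine.
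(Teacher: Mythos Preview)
Your computation is correct. The paper does not actually prove this proposition: it simply cites Casselman~\cite{CasselmanSpherical}, Theorem~3.4. What you have written is a self-contained reconstruction of Casselman's rank-one argument --- reduce to the $SL_2$ attached to $\alpha$, evaluate the integral on each annulus via the standard matrix factorizations, sum the resulting geometric series, and dispose of the remaining coefficients by Bruhat-cell bookkeeping. So your route is not different in spirit from the cited source, but it is more than the paper itself provides.

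One point worth tightening: the justification ``since $J \supseteq B(\mathfrak{o})$'' for the containment $BJ \cdot u \subseteq BuJ$ is not sufficient on its own. The clean argument uses the Iwahori factorization $J = B(\mathfrak{o}) N_-(\mathfrak{p})$ together with the fact that $u^{-1} N_-(\mathfrak{p}) u \subseteq J$ (every one-parameter root subgroup with parameter in $\mathfrak{p}$ lies in $J$, whether the root is positive or negative), giving
\[
BJu \;=\; B\,N_-(\mathfrak{p})\,u \;=\; B\,u \cdot \bigl(u^{-1} N_-(\mathfrak{p}) u\bigr) \;\subseteq\; BuJ.
\]
The same observation yields $B \cdot JwJ = BwJ$ for any $w \in W$, which you implicitly need when passing from the Iwahori--Matsumoto relation $JsJ \cdot JuJ \subseteq JuJ \cup JsuJ$ to the conclusion that the integrand is supported in $BuJ \cup BsuJ$. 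With that filled in, the vanishing for $u \notin \{1,s\}$ is complete.
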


\begin{proof}
  See Casselman~{\cite{CasselmanSpherical}}, Theorem~3.4.
\end{proof}

\section{Hecke algebra}

It is shown by Rogawski~{\cite{Rogawski}} that one may use the Iwahori Hecke
algebra to express the intertwining operators. We will review this method. See
also Reeder~{\cite{Reeder}} and Haines, Kottwitz and
Prasad~{\cite{HainesKottwitzPrasad}}.

We assume that the split semisimple group $G$ is simply-connected. There is no
loss of generality in assuming this for the purpose of computing the
intertwining operators and Casselman basis.

There are two Weyl groups which we must consider. There is the affine Weyl
group $W_{\tmop{aff}}$ which is $N_G (T (F)) / T (\mathfrak{o})$, and the
ordinary Weyl group $N_G (T (F)) / T (F)$. Following Iwahori and
Matsumoto~{\cite{IwahoriMatsumoto}}, {\cite{Iwahori}}, these Weyl groups and
their Hecke algebras may be described as follows. Let $\sigma_i =
s_{\alpha_i}$ be the simple reflections, where $\alpha_i$ are the simple roots
in $\hat{\Phi}$. Then $\sigma_i$ and $\sigma_j$ commute unless $i$ and $j$ are
adjacent nodes in the Dynkin diagram, in which case they satisfy the braid
relation; assuming $G$ is simply-laced, this has the form $\sigma_i \sigma_j
\sigma_i = \sigma_j \sigma_i \sigma_j$. Then $\sigma_1, \cdots, \sigma_r$
generate $W$. Another generator $\sigma_0$ is needed for $W_{\tmop{aff}}$.
Since we are assuming that $G$ is simply-connected, then $\sigma_0, \cdots,
\sigma_r$ generate $W_{\tmop{aff}}$ with generators and relations as above
except that one uses the extended Dynkin diagram to decide whether $i$ and $j$
are adjacent.

The Iwahori Hecke algebra is the convolution ring of compactly supported
functions $f$ on $G$ such that $f (k g k') = f (g)$ when $k, k' \in J$. Its
structure was determined by Iwahori and Matsumoto~{\cite{IwahoriMatsumoto}}.
Normalizing the Haar measure so that $J$ has volume $1$, let $t_w$ be the
characteristic function of $J w J$, and if $1 \leqslant i \leqslant r$ let
$t_i$ denote $t_{\sigma_i}$. The $t_w$ with $w \in W_{\tmop{aff}}$ form a
basis, and the $t_i$ form a set of algebra generators The $t_i$ satisfy the
same braid relations as the $s_i$, but the relation $\sigma_i^2 = 1$ is
replaced by $t_i^2 = (q - 1) t_i + q$.

The subalgebra elements of $H_{\tmop{aff}}$ consisting of functions that are
supported in $K$ is the finite Iwahori Hecke algebra $H$. Thus $\dim (H) =
|W|$ but $H_{\tmop{aff}}$ is infinite-dimensional The subalgebra $H$ has
generators $t_1, \cdots, t_r$ but omits $t_0$.

With notation as in the introduction, $V (\chi)^J$ is a module for
$H_{\tmop{aff}}$. If $\phi \in H$ and $f \in V (\chi)$ then $\phi f (g) =
\int_G \phi (h) f (g h) \, d h$.

We define a vector space isomorphism $\alpha = \alpha (\chi) : V (\chi)^J
\longrightarrow H$ as follows. If $F \in V (\chi)^J$ then let $\alpha (F) = f$
where $f$ is the function $f (g) = F (g^{- 1})$ if $g \in K$, 0 if $g \nin K$.
It may be checked using the Iwahori factorization that $\alpha (F) \in H$. Now
$V (\chi)^J$ is a left-module for $H$ (since $H \in H_{\tmop{aff}})$ and so is
$H$. It is easy to check that $\alpha$ is a homomorphism of left $H$-modules.
Now let $w \in W$ and define a map $\mathcal{M}_w =\mathcal{M}_{w,
\tmmathbf{z}} : H \longrightarrow H$ by requiring the diagram:
 \[\begin{CD}
      V (\chi)^J @>M_w >> V (^w \chi)^J\\
      @VV \small{\alpha (\chi)} V @VV \alpha(^w \chi) V\\
      H  @>\mathcal{M}_w >> H
 \end{CD}\]
to be commutative. If $w \in W$ let us define $\mu_{\tmmathbf{z}} (w)
=\mathcal{M}_w (1_H) \in H$, where $1_H$ is the unit element in the ring $H$.
Note that $\alpha_{\chi} (\phi_1) = 1_H$, so
\[ \mu_{\tmmathbf{z}} (w) = \alpha_{} (^w \chi) \phi_1 . \]
\begin{proposition}
  We have
  \[ \mathcal{M}_w (h) = h \cdot \mu_{\tmmathbf{z}} (w) \]
  for all $h \in H$.
\end{proposition}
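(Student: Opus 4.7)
The plan is to reduce the claim to the tautological fact that any endomorphism of $H$ viewed as a left module over itself is right multiplication by a fixed element. The content of the proposition is therefore that $\mathcal{M}_w : H \to H$ is a homomorphism of left $H$-modules; once this is established, applying $\mathcal{M}_w$ to $1_H$ and using linearity gives the formula immediately.

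First I would check that $M_w : V(\chi)^J \to V({}^w\chi)^J$ is itself a homomorphism of left $H$-modules. The $H$-action on $V(\chi)^J$ is defined as convolution, $(\phi f)(g) = \int_G \phi(h) f(gh)\,dh$, which is built from right translations of $f$. The intertwining integral $(M_w f)(g) = \int_{N\cap wN_-w^{-1}} f(w^{-1}ng)\,dn$ also acts by integration against right translates of the argument $g$, and Fubini lets us interchange the two integrations in a neighborhood of convergence. Thus $M_w(\phi f) = \phi \cdot M_w f$, and the meromorphic continuation preserves this identity.

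Next, the excerpt records that $\alpha(\chi) : V(\chi)^J \to H$ and $\alpha({}^w\chi) : V({}^w\chi)^J \to H$ are both isomorphisms of left $H$-modules. Since the defining square
\[
\begin{CD}
  V(\chi)^J @>M_w>> V({}^w\chi)^J \\
  @VV\alpha(\chi)V @VV\alpha({}^w\chi)V \\
  H @>\mathcal{M}_w>> H
\end{CD}
\]
commutes, the map $\mathcal{M}_w = \alpha({}^w\chi)\circ M_w \circ \alpha(\chi)^{-1}$ is a composition of three left $H$-linear maps and is therefore itself left $H$-linear.

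Finally, for any $h \in H$ I would simply compute
\[
\mathcal{M}_w(h) \;=\; \mathcal{M}_w(h\cdot 1_H) \;=\; h\cdot \mathcal{M}_w(1_H) \;=\; h\cdot \mu_{\mathbf{z}}(w),
\]
using left $H$-linearity and the definition $\mu_{\mathbf{z}}(w) = \mathcal{M}_w(1_H)$. There is no serious obstacle; the only subtle point worth spelling out is the $H$-linearity of $M_w$, and even that reduces to the observation that convolution is built from right translations while the intertwining integral is built from left translations, so the two commute.
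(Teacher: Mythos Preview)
Your proposal is correct and follows essentially the same approach as the paper: the paper's proof is the one-line observation that $\mathcal{M}_w$ is a left $H$-module homomorphism, hence $\mathcal{M}_w(h)=\mathcal{M}_w(h\cdot 1)=h\cdot\mathcal{M}_w(1)=h\cdot\mu_{\tmmathbf{z}}(w)$. You have simply unpacked why $\mathcal{M}_w$ is $H$-linear (via the commutative square and the $G(F)$-equivariance of $M_w$), which the paper leaves implicit since it has already recorded that $\alpha$ is an $H$-module map.
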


\begin{proof}
  $\mathcal{M}_w$ is a homomorphism of left $H$-modules, where $H$, being a
  ring, is a bimodule. \ Therefore $ \mathcal{M}_w (h) =\mathcal{M}_w (h \cdot
  1) = h\mathcal{M}_w (1) = h \cdot \mu_{\tmmathbf{z}} (w)$.
\end{proof}

\begin{lemma}
  If $l (w_1 w_2) = l (w_1) + l (w_2)$ then
  \[ \mu_{\tmmathbf{z}} (w_1 w_2) = \mu_{\tmmathbf{z}} (w_2) \mu_{w_2
     \tmmathbf{z}} (w_1) . \]
\end{lemma}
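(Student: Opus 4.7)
The plan is to deduce the lemma from the known cocycle property of the intertwining operators, transported across the isomorphism $\alpha$ into the Hecke algebra. Recall from the introduction that $M_{w_1 w_2} = M_{w_1} \circ M_{w_2}$ precisely when $l(w_1 w_2) = l(w_1) + l(w_2)$, and by definition $\mu_{\mathbf{z}}(w) = \mathcal{M}_{w,\mathbf{z}}(1_H) = \alpha({}^w\chi_{\mathbf{z}})(M_w \phi_1)$, where $\phi_1 \in V(\chi_{\mathbf{z}})^J$ satisfies $\alpha(\chi_{\mathbf{z}})(\phi_1) = 1_H$.

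First I would apply the commutative diagram defining $\mathcal{M}_w$ twice, once with ambient character $\chi = \chi_{\mathbf{z}}$ and once with ambient character ${}^{w_2}\chi_{\mathbf{z}} = \chi_{w_2 \mathbf{z}}$. Starting from $\phi_1 \in V(\chi_{\mathbf{z}})^J$ and applying $M_{w_2}$ gives a vector in $V({}^{w_2}\chi_{\mathbf{z}})^J$ whose image under $\alpha({}^{w_2}\chi_{\mathbf{z}})$ is $\mathcal{M}_{w_2,\mathbf{z}}(1_H) = \mu_{\mathbf{z}}(w_2)$. Then applying $M_{w_1}$ and following the second square gives
\[
\alpha({}^{w_1 w_2}\chi_{\mathbf{z}})\bigl(M_{w_1}(M_{w_2}\phi_1)\bigr) \;=\; \mathcal{M}_{w_1, w_2\mathbf{z}}\bigl(\mu_{\mathbf{z}}(w_2)\bigr).
\]
By the preceding Proposition, applied at the parameter $w_2 \mathbf{z}$, the right-hand side equals $\mu_{\mathbf{z}}(w_2) \cdot \mu_{w_2 \mathbf{z}}(w_1)$.

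On the other hand, by the length hypothesis $M_{w_1} \circ M_{w_2} = M_{w_1 w_2}$, so the left-hand side is $\alpha({}^{w_1 w_2}\chi_{\mathbf{z}})(M_{w_1 w_2}\phi_1) = \mu_{\mathbf{z}}(w_1 w_2)$. Equating the two expressions yields the desired identity.

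The only possible pitfall is bookkeeping of the parameter: one must observe that in the second square the ``base'' character is ${}^{w_2}\chi_{\mathbf{z}}$, so the operator on the Hecke algebra produced is $\mathcal{M}_{w_1, w_2 \mathbf{z}}$ (not $\mathcal{M}_{w_1, \mathbf{z}}$), which is precisely what forces the twist $w_2 \mathbf{z}$ to appear in $\mu_{w_2 \mathbf{z}}(w_1)$. No genuine obstacle arises; the result is essentially a formal consequence of the cocycle property of the $M_w$ together with the already-established right-multiplication formula for $\mathcal{M}_w$.
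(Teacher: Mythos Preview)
Your proof is correct and is essentially the same argument as the paper's: both use the factorization $M_{w_1 w_2} = M_{w_1}\circ M_{w_2}$ and chase the two-step commutative diagram $V(\chi)^J \to V({}^{w_2}\chi)^J \to V({}^{w_1 w_2}\chi)^J$ over $H \to H \to H$, applying the preceding right-multiplication formula $\mathcal{M}_w(h)=h\cdot\mu(w)$ at the parameter $w_2\mathbf{z}$. You have simply written out explicitly what the paper's one-line diagram chase leaves implicit.
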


\begin{proof}
  We have $M_{w_1 w_2} = M_{w_1} \circ M_{w_2}$. Therefore this follows from
  the commutativity of the diagram:
\[\begin{CD}
V (\chi)^J @> M_{w_2}>> V (^{w_2} \chi)^J @> M_{w_1}>> V (^{w_1 w_2} \chi)^J\\
@VV \alpha(\chi) V @VV \alpha(^{w_2} \chi) V @VV \alpha (^{w_1 w_2} \chi) V\\
H @>> \mathcal{M}_{w_2} > H @>> \mathcal{M}_{w_1} > H
\end{CD}
\]

\end{proof}

\begin{lemma}
  \label{srintlemma}If $w = \sigma_i$ is a simple reflection, then
  $\mathcal{M}_w (1) = \frac{1}{q} t_i + (1 - \frac{1}{q})
  \frac{\tmmathbf{z}^{\alpha_i}}{1 -\tmmathbf{z}^{\alpha_i}} .$
\end{lemma}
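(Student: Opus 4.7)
The plan is to deduce the lemma by simply translating Proposition~\ref{intertwiningofphi1} across the isomorphism $\alpha$, using the commutative diagram that defines $\mathcal{M}_s$. Since $\mathcal{M}_s(1_H) = \mathcal{M}_s(\alpha(\chi)\phi_1) = \alpha({}^s\chi)\bigl(M_s({}^{\chi}\phi_1)\bigr)$, everything reduces to identifying the images of the two Iwahori basis vectors $\phi_1$ and $\phi_s$ appearing on the right side of Proposition~\ref{intertwiningofphi1}.

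First I would verify the two identifications
\[
\alpha(\eta)(\phi_1) = 1_H, \qquad \alpha(\eta)(\phi_s) = t_i,
\]
for any unramified $\eta$, where $s=\sigma_i$. For the first, $\alpha(\phi_1)(g) = \phi_1(g^{-1})$ vanishes unless $g^{-1}\in BJ$, and for $g\in K$ this forces $g^{-1}\in BJ\cap K = J$, so $\alpha(\phi_1) = \mathbf 1_J = 1_H$. For the second, $\alpha(\phi_s)(g) = \phi_s(g^{-1})$ is supported where $g^{-1}\in B(F)sJ\cap K$; using $K = \bigsqcup_u JuJ = \bigsqcup_u B(\mathfrak o)uJ$ this forces $g^{-1}\in JsJ$, hence (since $s^{-1}=s$) $g\in JsJ$. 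On this double coset the Iwasawa component lies in $B(\mathfrak o)$, so $\delta^{1/2}\eta(b)=1$ by unramifiedness, and $\alpha(\phi_s) = \mathbf 1_{JsJ} = t_i$.

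Next I would apply $\alpha({}^s\chi)$ to both sides of Proposition~\ref{intertwiningofphi1}. The left-hand side becomes $\mathcal M_s(1_H) = \mu_{\mathbf z}(s)$ by definition of $\mathcal M_s$ and the commutative diagram; the right-hand side becomes
\[
\tfrac{1}{q}\,\alpha({}^s\chi)(\phi_s) + \left(1-\tfrac{1}{q}\right)\frac{\mathbf z^{\alpha_i}}{1-\mathbf z^{\alpha_i}}\,\alpha({}^s\chi)(\phi_1) = \tfrac{1}{q}\,t_i + \left(1-\tfrac{1}{q}\right)\frac{\mathbf z^{\alpha_i}}{1-\mathbf z^{\alpha_i}}\cdot 1_H,
\]
which is exactly the claimed formula.

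There is no real obstacle here: the only thing that requires any thought is the identification $\alpha(\phi_s)=t_i$, and even that is a short calculation using the Iwasawa decomposition and the Bruhat decomposition of $K$ together with the fact that $\chi$ is unramified. Everything else is formal from the definition of $\mathcal M_s$ via the commutative diagram and the quoted formula of Casselman.
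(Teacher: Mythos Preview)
Your proposal is correct and is exactly the approach the paper takes: the paper's proof is the single line ``This follows from Proposition~\ref{intertwiningofphi1},'' and you have simply unpacked that by applying $\alpha({}^s\chi)$ to both sides and using the identifications $\alpha(\phi_1)=1_H$ (already noted in the paper) and $\alpha(\phi_s)=t_i$.
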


\begin{proof}
  This follows from Proposition~\ref{intertwiningofphi1}.
\end{proof}

We will denote $\alpha_{\chi} (\psi_u) = \psi (u)$. Note that this element of
$H$ is independent of $\chi$: it is just the union of the characteristic
functions of the double cosets $J w J$ with $w \geqslant u$. If $f \in H$, let
$\Lambda (f)$ denote the coefficient of $1$ in the expansion of $f$ in terms
of the basis elements. Then
\[ m_{\tmmathbf{z}} (u, v) = \Lambda (\psi (u) \mu_{\tmmathbf{z}} (v)) . \]

Let us introduce the following notation. If $f, g \in H$ and $x \in W$, we
will write $f \equiv g$ mod $x$ if the only $t_w$ ($w \in W$) that have
nonzero coefficient in $f - g$ are those with $w \geqslant x$.

\begin{proposition}
  \label{humpfgen}Let $x, y \in W$ let $s$ be a simple reflection. Assume $x
  \leqslant y$.
  
  (a) Either $x s \leqslant y$ or $x s \leqslant y s$.
  
  (b) Either $x \leqslant y s$ or $x s \leqslant y s$.
\end{proposition}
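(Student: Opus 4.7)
The plan is a case analysis using the subword characterization of the Bruhat order: $u \leqslant w$ if and only if, for any (equivalently, some) reduced expression $w = s_1 \cdots s_n$, there exist $1 \leqslant i_1 < \cdots < i_m \leqslant n$ such that $u = s_{i_1} \cdots s_{i_m}$ is itself reduced. I would first fix a reduced expression for $y$ adapted to the right action of $s$. When $ys < y$, I write $y = s_1 \cdots s_{n-1} s$ reduced, so $ys = s_1 \cdots s_{n-1}$; when $ys > y$, I take any reduced $y = s_1 \cdots s_n$, and then $ys = s_1 \cdots s_n \cdot s$ is a reduced expression of length $n+1$ for $ys$. The hypothesis $x \leqslant y$ then supplies a reduced subword $x = s_{i_1} \cdots s_{i_m}$ of $y$.

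For part (a), I split on whether $ys > y$ or $ys < y$. If $ys > y$: when $xs > x$, appending $s$ gives a length-$(m{+}1)$ reduced subword of $s_1 \cdots s_n s = ys$ (using positions $i_1, \ldots, i_m, n{+}1$), so $xs \leqslant ys$; when $xs < x$, we have $xs \leqslant x \leqslant y$ directly. If $ys < y$: when $xs < x$, again $xs \leqslant x \leqslant y$; when $xs > x$, we must have $i_m < n$ (otherwise $s_{i_m} = s$ would make $x$ end in $s$ in this reduced expression and force $xs < x$), so $s_{i_1} \cdots s_{i_m} s$ is a length-$(m{+}1)$ reduced subword of $y = s_1 \cdots s_{n-1} s$ using positions $i_1, \ldots, i_m, n$, yielding $xs \leqslant y$.

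For part (b), the case $ys > y$ is immediate from $x \leqslant y < ys$. When $ys < y$, I split on whether the reduced subword expression $x = s_{i_1} \cdots s_{i_m}$ uses the final letter of $y$. If $i_m < n$, then $x$ appears as a reduced subword of $s_1 \cdots s_{n-1} = ys$, so $x \leqslant ys$. If $i_m = n$, then $s_{i_m} = s$; the word $s_{i_1} \cdots s_{i_{m-1}}$ represents $xs$ (after cancellation of the final $s$), and it must have length exactly $m{-}1$ (otherwise $x = s_{i_1} \cdots s_{i_{m-1}} s$ would have length less than $m$, contradicting reducedness), so this is a reduced subword of $ys$ and $xs \leqslant ys$.

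The main obstacle is minor bookkeeping: one must verify in each case that the constructed subword is actually reduced, which amounts to checking the length relation $l(xs) = l(x) \pm 1$ is consistent with the length of the constructed subword. The statement is essentially Proposition~5.9 of Humphreys' \emph{Reflection Groups and Coxeter Groups} (which explains the label \texttt{humpfgen}), so an alternative would be to cite it directly rather than spell out the subword argument.
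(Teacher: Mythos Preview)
Your argument is correct; the subword bookkeeping in each case checks out. The paper, however, takes a different route: it simply cites part (a) as Humphreys' Proposition~5.9 and then deduces (b) from (a) by the order-reversing involution $w \mapsto w_0 w$. Concretely, $x \leqslant y$ gives $w_0 y \leqslant w_0 x$, and (a) applied to this pair yields either $w_0 y\, s \leqslant w_0 x$ or $w_0 y\, s \leqslant w_0 x\, s$, which after multiplying back by $w_0$ is exactly (b). Your approach is more self-contained and, notably, valid for an arbitrary Coxeter group, whereas the paper's reduction of (b) to (a) uses the existence of a long element and hence the finiteness of $W$. The paper's argument is shorter once (a) is taken as known, but yours avoids any external citation and any finiteness hypothesis.
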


\begin{proof}
  Part (a) is proved in Humpfreys, {\tmem{Reflection Groups and Coxeter
  Groups}}, Proposition~5.9. For (b), since $W$ is a finite Weyl group, it has
  a long element $w_0$ and $w_0 y < w_0 x$. Therefore by (a) either $w_0 y s <
  w_0 x$ or $w_0 y s < w_0 x y$, which implies (b).
\end{proof}

\begin{proposition}
  \label{stableinterval}Let $u \in W$ and let $s$ be a simple reflection.
  
  (a) Assume that $u s > u$. Then for all $x \in W$ we have $x \geqslant u$
  if and only if $x s \geqslant u$.
  
  (b) Assume that $u s < u$. Then for all $x \in W$ we have $x \leqslant u$ if
  and only if $x s \leqslant u$.
\end{proposition}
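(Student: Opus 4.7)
The plan is to reduce both parts directly to Proposition~\ref{humpfgen}, using the hypothesis on $us$ versus $u$ to collapse the two-case disjunction into a single conclusion.

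For part (a), assume $us > u$, i.e.\ $u \leqslant us$ in the Bruhat order. To prove the forward implication, suppose $x \geqslant u$. Then Proposition~\ref{humpfgen}(b), applied with the roles $u \leqslant x$, yields that either $u \leqslant xs$ or $us \leqslant xs$; in the second case we chain $u \leqslant us \leqslant xs$, so in either case $xs \geqslant u$. For the reverse implication, suppose $xs \geqslant u$. Applying Proposition~\ref{humpfgen}(b) again to $u \leqslant xs$ gives either $u \leqslant (xs)s = x$ or $us \leqslant (xs)s = x$; in the second case $u \leqslant us \leqslant x$, so in either case $x \geqslant u$.

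For part (b), assume $us < u$, i.e.\ $us \leqslant u$. The argument is symmetric, using Proposition~\ref{humpfgen}(a) in place of (b). If $x \leqslant u$, then Proposition~\ref{humpfgen}(a) gives $xs \leqslant u$ or $xs \leqslant us$; in the latter case $xs \leqslant us \leqslant u$, and either way $xs \leqslant u$. Conversely, if $xs \leqslant u$, apply Proposition~\ref{humpfgen}(a) to the pair $xs \leqslant u$ to get $x = (xs)s \leqslant u$ or $x \leqslant us \leqslant u$; both conclude $x \leqslant u$. (Alternatively, part (b) follows from part (a) by multiplying by $w_0$, since $y \leqslant u \Leftrightarrow w_0 u \leqslant w_0 y$ and $us < u \Leftrightarrow (w_0 u)s > w_0 u$.)

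There is no real obstacle here: the statement is exactly the ``stability'' half of the lifting property for Bruhat order, and Proposition~\ref{humpfgen} has already done the heavy lifting. The only thing to watch is orienting the two applications of Proposition~\ref{humpfgen} correctly so that the hypothesis $us > u$ (respectively $us < u$) allows one to swallow the unwanted alternative by transitivity.
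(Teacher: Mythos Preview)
Your proof is correct and follows essentially the same approach as the paper: both arguments reduce directly to Proposition~\ref{humpfgen} and use the hypothesis $us > u$ (resp.\ $us < u$) to absorb the unwanted alternative via transitivity. You are simply more explicit about which part of Proposition~\ref{humpfgen} is invoked at each step, and your alternative derivation of (b) from (a) via $w_0$ is a pleasant addition.
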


\begin{proof}
  For (a), if $x \geqslant u$ then either $x s \geqslant u$ or $x s \geqslant
  u s$ by Proposition~\ref{humpfgen}, but if $u s > u$, both cases imply $x s
  \geqslant u$. Conversely if $x s \geqslant u$ the same argument shows $x
  \geqslant u$. This proves (a) and (b) is similar.
\end{proof}

If $s$ is a simple reflection, let us denote
\begin{equation}
  \label{ominusdef} u \ominus s = \left\{ \begin{array}{ll}
    u & \text{if $u < u s$,}\\
    u s & \text{if $u s < u$} .
  \end{array} \right.
\end{equation}
\begin{proposition}
  \label{ominusprop}$\text{If $x \geqslant u$ and $y \leqslant s$ then $x y
  \geqslant u \ominus s$} .$
\end{proposition}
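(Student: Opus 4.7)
The plan is to reduce to a short case analysis on $y$ using the fact that $y \leqslant s$ forces $y \in \{1, s\}$, and then to handle each case by invoking Proposition~\ref{stableinterval} together with the definition of $\ominus$.

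First I would observe that in a Coxeter group $y \leqslant s$ for a simple reflection $s$ means $y = 1$ or $y = s$. The case $y = 1$ is trivial: $xy = x \geqslant u$, and since $u \ominus s \in \{u, us\}$ is by definition the smaller of $u$ and $us$, we have $u \ominus s \leqslant u \leqslant x$.

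The main case is $y = s$, where we must show $xs \geqslant u \ominus s$. I would split according to the sign of $us$ versus $u$, exactly the split used to define $u \ominus s$ in (\ref{ominusdef}). If $u < us$, then $u \ominus s = u$, and Proposition~\ref{stableinterval}(a) applied to $u$ gives $x \geqslant u \iff xs \geqslant u$; since $x \geqslant u$ by hypothesis, we conclude $xs \geqslant u = u \ominus s$. If instead $us < u$, then $u \ominus s = us$, and now $(us) \cdot s = u > us$, so Proposition~\ref{stableinterval}(a) applies with $us$ in place of $u$, yielding $x \geqslant us \iff xs \geqslant us$. Since $x \geqslant u > us$ we have $x \geqslant us$, and therefore $xs \geqslant us = u \ominus s$.

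There is essentially no obstacle here beyond bookkeeping: both subcases are immediate once Proposition~\ref{stableinterval} is set up, and the definition of $u \ominus s$ is tailored precisely so that the inequality $u \ominus s \leqslant u$ holds automatically (handling the $y = 1$ branch) while keeping $u \ominus s$ fixed under right multiplication by $s$ in the sense needed for the $y = s$ branch. The only thing to be careful about is making sure one applies Proposition~\ref{stableinterval}(a) in the correct direction in the second subcase, namely with the base element $us$ rather than $u$, since that is the one satisfying the "$\cdot s$ goes up" hypothesis required by part (a).
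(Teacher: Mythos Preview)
Your proof is correct and follows exactly the approach the paper indicates: the paper's proof is the single sentence ``This follows from Proposition~\ref{stableinterval},'' and your case analysis on $y\in\{1,s\}$ together with the two subcases on $u$ versus $us$ simply spells out that deduction in full detail.
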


\begin{proof}
  This follows from Proposition~\ref{stableinterval}.
\end{proof}

\begin{proposition}
  \label{firstusefulfact}Let $s$ be a simple reflection, and let $u \in W$
  such that $u s > u$. Then $\psi (u) t_s = q \psi (u)$ and $\psi (u)
  \mu_{\tmmathbf{z}} (s) = \left( \frac{1 - q^{- 1} \tmmathbf{z}^{\alpha}}{1
  -\tmmathbf{z}^{\alpha}} \right) \psi (u)$.
\end{proposition}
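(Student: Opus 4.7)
The plan is to compute $\psi(u)t_s$ directly from the definition $\psi(u) = \sum_{w \geq u} t_w$ together with the standard Iwahori--Hecke algebra multiplication rules, and then to obtain the second assertion by combining the first with Lemma~\ref{srintlemma}.

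First I would recall that in $H$ we have $t_w t_s = t_{ws}$ when $ws > w$, while $t_w t_s = q\, t_{ws} + (q-1)\, t_w$ when $ws < w$. Applied termwise, $\psi(u) t_s = \sum_{w \geq u} t_w t_s$ breaks into two subsums indexed by whether $ws > w$ or $ws < w$. The key combinatorial input is Proposition~\ref{stableinterval}(a): since $us > u$, the set $\{w \in W : w \geq u\}$ is stable under the involution $w \mapsto ws$. Hence we may partition this set into pairs $\{w, ws\}$ with $ws > w$, and each pair contributes $t_{ws} + (q\, t_w + (q-1)\, t_{ws}) = q(t_w + t_{ws})$ to $\psi(u) t_s$. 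Summing over pairs yields $\psi(u) t_s = q\, \psi(u)$.

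For the second identity I would invoke Lemma~\ref{srintlemma}, which gives
\[
\mu_{\tmmathbf{z}}(s) = \frac{1}{q} t_s + \left(1 - \frac{1}{q}\right) \frac{\tmmathbf{z}^{\alpha}}{1 - \tmmathbf{z}^{\alpha}} \cdot 1_H.
\]
Multiplying on the left by $\psi(u)$ and using the first identity to replace $\psi(u) t_s$ by $q\,\psi(u)$, the coefficient of $\psi(u)$ becomes $1 + \frac{q-1}{q} \cdot \frac{\tmmathbf{z}^{\alpha}}{1 - \tmmathbf{z}^{\alpha}}$, which simplifies to $\frac{1 - q^{-1}\tmmathbf{z}^{\alpha}}{1 - \tmmathbf{z}^{\alpha}}$ after putting over a common denominator.

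There is no serious obstacle here: the only subtle point is invoking Proposition~\ref{stableinterval}(a) at the right moment to ensure the pairing $w \leftrightarrow ws$ preserves the index set, so that the contributions from $w$ and from $ws$ combine cleanly. Everything else is a direct calculation in the Iwahori--Hecke algebra.
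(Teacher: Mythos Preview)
Your proof is correct and follows essentially the same approach as the paper's: both use Proposition~\ref{stableinterval}(a) to pair the elements $\{w,ws\}$ above $u$, then compute the contribution of each pair. The paper phrases the pair computation as $(t_x+t_{xs})(t_s-q)=t_x(1+t_s)(t_s-q)=0$ via the quadratic relation, while you expand $t_w t_s$ and $t_{ws}t_s$ directly, but this is only a cosmetic difference; the derivation of the second identity from Lemma~\ref{srintlemma} is identical.
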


\begin{proof}
  The second conclusion follows from the first and Lemma~\ref{srintlemma}, so
  we prove $\psi (u) t_s = q \psi (u)$. By Proposition~\ref{stableinterval}
  $\{x \in W| x \geqslant u\}$ is stable under right multiplication by $s$, so
  we may write $\psi (u)$ as a sum of terms of the form $t_x + t_{x s}$ with
  $x s > x$. But
  \[ (t_x + t_{x s}) (t_s - q) = t_x (1 + t_s) (t_s - q) = 0 \]
  so $\psi (u) (t_s - q) = 0$.
\end{proof}

\begin{proposition}
  \label{secondusefulfact}Let $s$ be a simple reflection, and let $u \in W$
  such that $u s > u$. Then
  \begin{equation}
    \label{psiustsid} \text{$\psi (u s) t_s \equiv q \psi (u)$ mod $u s$},
    \hspace{2em} \text{$\psi (u s) \mu_{\tmmathbf{z}} (s) \equiv \psi (u)$ mod
    $u s$.}
  \end{equation}
\end{proposition}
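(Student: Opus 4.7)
The plan is to prove the first congruence $\psi(us)t_s \equiv q\psi(u) \pmod{us}$ directly by expanding both sides in the $t_w$-basis and pairing up elements under right multiplication by $s$; the second congruence will follow by combining the first with Lemma~\ref{srintlemma}.

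First I would recall that $\psi(x) = \sum_{v \geqslant x} t_v$ and that the Hecke relations give $t_w t_s = t_{ws}$ when $ws > w$ and $t_w t_s = q\,t_{ws} + (q-1)\,t_w$ when $ws < w$. Partition $W$ into pairs $\{w, ws\}$ with $ws > w$ in the length order; each such pair contributes to $\psi(us)t_s$ according to which of $w, ws$ lies in the upper set $\{v : v \geqslant us\}$. The key combinatorial input is the following consequence of the subword characterization of Bruhat order: if $us > u$, $w \geqslant u$ and $ws > w$, then $ws \geqslant us$. (Take a reduced word for $w$ containing one for $u$ as a subword and append $s$; this reduced word for $ws$ contains the reduced word $\underline{u}\,s$ for $us$ as a subword.)

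Using this fact together with Proposition~\ref{stableinterval}(a) applied to the upper set of $u$, I would enumerate the three possible configurations for a pair $\{w,ws\}$ with $ws > w$:
\begin{itemize}
\item[(i)] both $w,ws \geqslant us$ (so in particular both $\geqslant u$), contributing $q\,t_w + q\,t_{ws}$ to $\psi(us)t_s$ and the same to $q\psi(u)$;
\item[(ii)] only $ws \geqslant us$ while $w \not\geqslant us$ (still both $\geqslant u$ by Proposition~\ref{stableinterval}(a) and the lifting fact above), contributing $q\,t_w + (q-1)\,t_{ws}$ to $\psi(us)t_s$ versus $q\,t_w + q\,t_{ws}$ to $q\psi(u)$, a discrepancy of $t_{ws}$ with $ws \geqslant us$;
\item[(iii)] neither $w,ws \geqslant u$ (equivalently neither $\geqslant us$ by the lifting fact), contributing $0$ to both.
\end{itemize}
The case ``only $w \geqslant us$'' is excluded because $w \geqslant us$ and $ws > w$ would force $ws \geqslant us$. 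Summing, the difference $q\psi(u) - \psi(us)t_s$ is a sum of terms $t_v$ with $v \geqslant us$, proving the first congruence.

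For the second congruence, I would apply Lemma~\ref{srintlemma} to write
\[
\psi(us)\mu_{\mathbf{z}}(s) = \tfrac{1}{q}\,\psi(us)t_s + \bigl(1 - \tfrac{1}{q}\bigr)\tfrac{\mathbf{z}^{\alpha}}{1-\mathbf{z}^{\alpha}}\,\psi(us).
\]
The first term is congruent to $\psi(u)$ modulo $us$ by what we just proved, and the second term is a scalar multiple of $\psi(us)$, which is by definition supported on $\{t_v : v \geqslant us\}$, hence is $\equiv 0 \pmod{us}$. Adding gives the second congruence.

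The main obstacle — really the only nontrivial step — is verifying the lifting claim that $w \geqslant u$, $us > u$, and $ws > w$ force $ws \geqslant us$; once this is in hand, the case analysis above is mechanical and both congruences drop out.
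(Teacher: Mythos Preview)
Your proof is correct and follows essentially the same route as the paper's: both arguments reduce the second congruence to the first via Lemma~\ref{srintlemma} and the trivial observation $\psi(us)\equiv 0$ mod $us$, and both prove the first congruence by a case analysis governed by the lifting property (your subword argument is equivalent to the paper's use of Proposition~\ref{humpfgen}(a)). The only difference is bookkeeping---the paper fixes an $x\not\geqslant us$ and computes the coefficient of $t_x$ in $\psi(us)t_s$, whereas you group $W$ into right $\langle s\rangle$-cosets $\{w,ws\}$ and compare contributions to both sides simultaneously---but the underlying cases and conclusions coincide.
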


\begin{proof}
  The first equation in (\ref{psiustsid}) implies the second since by
  Lemma~\ref{srintlemma} $\mu_{\tmmathbf{z}} (s)$ differs from $\frac{1}{q}
  t_s$ by a scalar and $\psi (u s) \equiv 0$ mod $u s$. We prove the first
  equation.
  
  Let us determine the coefficient of $t_x$ in $\psi (u s) t_s$ under the
  assumption that $x \ngeqslant u s$. We will show that this coefficient
  equals $q$ if $x \geqslant u$ and $0$ otherwise. This will prove the
  Proposition since this is also the coefficient of $t_x$ in $q \psi (u)$.
  
  If $x \geqslant u$ then by Proposition~\ref{humpfgen} either $u s \leqslant
  x$ or $u s \leqslant x s$. Since we are assuming that $x \ngeqslant u s$ it
  follows that $u s \leqslant x s$. Hence $\psi (u s)$ has a term $t_{x s}$
  but no term $t_x$. Therefore the only term in the sum
  \begin{equation}
    \label{psisumexpand} \psi (u s) t_s = \sum_{z \geqslant u s} t_z t_s
  \end{equation}
  that can contribute to the coefficient of $t_x$ is the term is $t_{x s}
  t_s$. Since $x s \geqslant u s$ but $x \ngeqslant u s$ we have $x s > x$.
  Thus $t_{x s} = t_x t_s$ and $t_{x s} t_s = t_x q$. Therefore the
  coefficient of $t_x$ is $q$.
  
  If $x \ngeqslant u$ then we claim that there is no contribution to $t_x$
  from any term in the sum (\ref{psisumexpand}). Indeed, the only $z$ which
  could produce a contribution would $z = x$ or $z = x s$, but the condition
  $z \geqslant u s$ is not satisfied for these. Indeed, $x \ngeqslant u s$
  since $x \ngeqslant u$. If $x s \geqslant u s$ then by
  Proposition~\ref{humpfgen}, either $x \geqslant u s$ or $x \geqslant u$.
  Since $u s > u$ we have $x \geqslant u$ in either case, contradicting our
  assumption.
\end{proof}

\section{Proof of Theorem \ref{maintheorem}}

In this section we will not assume that $\hat{\Phi}$ is simply-laced.

\begin{theorem}
  Suppose that there exist reduced words
  \begin{eqnarray*}
    v & = & s_1 \ldots s_n\\
    u & = & s_1 \cdots \widehat{s_{i_1}} \cdots \widehat{s_{i_2}} \cdots
    \cdots \widehat{s_{i_m}} \cdots s_n,
  \end{eqnarray*}
  so that $l (v) = n$ and $l (u) = n - m$. Suppose moreover that $|S (u, v) |
  = l (v) - l (u)$ and that
  \[ \left. S (u, v) =\{s_n s_{n - 1} \cdots s_{i_k + 1} (\alpha_{i_k}) |1
     \leqslant k \leqslant m \right\} . \]
  Then
  \[ m (u, v) = \prod_{\alpha \in S (u, v)} \frac{1 - q^{- 1}
     \tmmathbf{z}^{\alpha}}{1 -\tmmathbf{z}^{\alpha}} . \]
\end{theorem}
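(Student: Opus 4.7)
The proof proceeds by induction on $n = l(v)$, processing the rightmost letter of the reduced word for $v$ at each step. The base case $n = 0$ gives $u = v = e$ and $S(u,v)=\emptyset$, so the empty product is $1 = m(e,e)$ by Theorem~\ref{uppertriangular}. For the inductive step, invoke the Hecke-algebra composition formula
\[
\mu_{\tmmathbf{z}}(v) = \mu_{\tmmathbf{z}}(s_n)\,\mu_{s_n\tmmathbf{z}}(v''), \qquad v'' = s_1 \cdots s_{n-1},
\]
so that $m_{\tmmathbf{z}}(u,v) = \Lambda\bigl(\psi(u)\,\mu_{\tmmathbf{z}}(s_n)\,\mu_{s_n\tmmathbf{z}}(v'')\bigr)$. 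I would then split into two cases depending on whether the terminal index $n$ lies in the omission set $\{i_1,\ldots,i_m\}$.

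In the \emph{omission case} $n = i_m$, the element $u$ is a reduced subword of $v''$ and $\alpha_n \in S(u,v)$ (the $k = m$ term, with empty product $s_n\cdots s_{n+1} = e$). From the good-word hypothesis together with Proposition~\ref{wralphacrit} one deduces $us_n > u$, and Proposition~\ref{firstusefulfact} then gives
\[
\psi(u)\,\mu_{\tmmathbf{z}}(s_n) = \frac{1 - q^{-1}\tmmathbf{z}^{\alpha_n}}{1 - \tmmathbf{z}^{\alpha_n}}\,\psi(u).
\]
The residual pair $(u,v'')$ is a good word pair with $m-1$ omissions $i_1,\ldots,i_{m-1}$, and one checks $S(u,v'') = \{s_{n-1}\cdots s_{i_k+1}(\alpha_{i_k})\}_{k<m}$. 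Applying the inductive hypothesis with character $s_n\tmmathbf{z}$, together with $\tmmathbf{z}^{s_n\beta} = (s_n\tmmathbf{z})^\beta$ and the decomposition $S(u,v) = \{\alpha_n\}\sqcup s_n\bigl(S(u,v'')\bigr)$, reconstructs the claimed product.

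In the \emph{retention case} $n \notin \{i_1,\ldots,i_m\}$, write $u = u''s_n$ with $u'' < u$. Proposition~\ref{secondusefulfact} gives
\[
\psi(u)\,\mu_{\tmmathbf{z}}(s_n) = \psi(u'') + R,
\]
where $R$ is $t_w$-supported on $\{w : w \geq u\}$. The pair $(u'',v'')$ is a good word pair with the same omissions $i_1 < \cdots < i_m < n$, and its root set satisfies $s_n\bigl(S(u'',v'')\bigr) = S(u,v)$. Assuming the residual contribution $\Lambda\bigl(R\,\mu_{s_n\tmmathbf{z}}(v'')\bigr)$ vanishes, the inductive hypothesis applied to $(u'',v'')$ with character $s_n\tmmathbf{z}$ yields the desired product exactly as in the previous case.

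The hard part is this residual vanishing in the retention case. Using the identity $\Lambda(t_w\,h) = q^{l(w)}\,[h]_{w^{-1}}$ (coefficient of $t_{w^{-1}}$ in $h$) and the fact that $\mu_{s_n\tmmathbf{z}}(v'')$ is $t$-supported on the Bruhat interval $[e,v'']$, the vanishing follows once one shows that $w \geq u$ and $w^{-1} \leq v''$ are incompatible for every $w$ appearing in $R$. When the simple reflection $s_n$ does not reappear among $s_1,\ldots,s_{n-1}$, the support invariance of Bruhat order yields this immediately, since $s_n \in \mathrm{supp}(u) \subseteq \mathrm{supp}(w^{-1})$ while $s_n \notin \mathrm{supp}(v'')$. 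The general case, in which $s_n$ may recur earlier in the word for $v$, requires a more delicate analysis of the precise shape of $R$ afforded by the good-word hypothesis---this is the combinatorial heart of the proof.
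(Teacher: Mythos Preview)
Your inductive framework is sound and closely parallels the paper's telescoping argument, but the place where you stop is precisely the step that is \emph{not} hard, while two genuinely needed verifications are passed over.

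\textbf{The ``hard part'' is immediate.} In the retention case $n\notin\{i_1,\dots,i_m\}$, the good-word hypothesis says exactly that the set of indices $j$ with $u\le s_1\cdots\widehat{s_j}\cdots s_n$ equals $\{i_1,\dots,i_m\}$; in particular $u\not\le s_1\cdots s_{n-1}=v''$. Your support claim needs a small correction: since $\mu_{s_n\tmmathbf z}(v'')=\mu(s_{n-1})\cdots\mu(s_1)$ expands in terms of subwords of $s_{n-1}\cdots s_1=(v'')^{-1}$, its $t$-support lies in $[e,(v'')^{-1}]$, not $[e,v'']$. Plugging this into $\Lambda(t_w h)=q^{l(w)}[h]_{w^{-1}}$ gives $\Lambda(t_w\mu(v''))\ne 0\Rightarrow w^{-1}\le (v'')^{-1}\Leftrightarrow w\le v''$. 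Combined with $w\ge u$ this forces $u\le v''$, a contradiction. So $\Lambda(R\,\mu(v''))=0$ outright; no ``more delicate analysis'' is required, and the recurrence of $s_n$ earlier in the word is irrelevant.

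\textbf{What does require care.} You assert without proof that the smaller pairs again satisfy the theorem's hypothesis. In the omission case one must show $\{j\le n-1: u\le s_1\cdots\widehat{s_j}\cdots s_{n-1}\}=\{i_1,\dots,i_{m-1}\}$: the inclusion $\supseteq$ is clear, and for $\subseteq$ one uses $us_n>u$ together with Proposition~\ref{humpfgen}(b) to pass from $u\le v''_j$ to $u\le v_j$, whence $j\in\{i_1,\dots,i_m\}$. (Your claim $us_n>u$ also needs the good-word hypothesis: if $us_n<u$, the exchange condition produces a position $p\notin\{i_1,\dots,i_m\}$ with $u\le v_p$.) In the retention case one similarly shows $\{j\le n-1: u''\le v''_j\}=\{i_1,\dots,i_m\}$, this time using $u\not\le v''$ and Proposition~\ref{humpfgen}(a). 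These verifications are routine but not automatic.

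\textbf{Comparison with the paper.} The paper does not induct: it writes $\psi(u)\mu_{\tmmathbf z}(v)$ as one telescoping sum and, for each non-omission index $j$, uses Proposition~\ref{secondusefulfact} followed by iterated Proposition~\ref{ominusprop} to show the remainder is supported on elements $\ge (s_1\cdots\widehat{s_{i_1}}\cdots s_j)\ominus s_{j-1}\ominus\cdots\ominus s_1$; a combinatorial contradiction (again exploiting the good-word hypothesis) shows this is never $1$. Your induction repackages this: the residual vanishing at the outermost step is the paper's $j=n$ term, and the recursion absorbs the rest. Both arguments rest on the same combinatorial fact, expressed in your language as $u\not\le v''$.
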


\medbreak\noindent
The reflections $r = r_{\alpha}$ with $\alpha \in S (u, v)$ such that $u
\leqslant v r < v$ are precisely the $s_n s_{n - 1} \cdots s_{i_k + 1} s_{i_k}
s_{i_k + 1} \cdots s_n$ with $1 \leqslant k \leqslant m$, and so $u \leqslant
s_1 \cdots \widehat{s_{i_k}} \cdots s_n < v$. Therefore the hypothesis that $u
= s_1 \cdots \widehat{s_{i_1}} \cdots \widehat{s_{i_2}} \cdots \cdots
\widehat{s_{i_m}} \cdots s_n$ is equivalent to the assumption that $s_1 \cdots
s_n$ is a good word for $v$ with respect to $u$. It follows that this theorem
is equivalent to Theorem \ref{maintheorem}.

\medbreak\noindent
\begin{proof}
  Here $s_i$ is a simple reflection. Let $\alpha_i$ be the corresponding
  simple root. We will write $\mu (s_n) = \mu_{\tmmathbf{z}} (s_n)$, $\mu
  (s_{n - 1}) = \mu_{s_n (\tmmathbf{z})} (s_{n - 1})$, ... , suppressing the
  dependence on the spectral parameters. We have $m (u, v) = \Lambda (\psi (u)
  \mu_{\tmmathbf{z}} (v))$ where we may write $\psi (u) \mu_{\tmmathbf{z}}
  (v)$ as a sum of terms
  \begin{eqnarray*}
    [\psi (s_1 \scdots \widehat{s_{i_1}} \scdots \widehat{s_{i_m}} \scdots s_n)
    \mu (s_n) -_{} \psi (s_1 \scdots \widehat{s_{i_1}} \scdots \widehat{s_{i_m}}
    \scdots s_{n - 1})] \mu (s_{n - 1}) \scdots \mu (s_1) & + & \\
    {}[\psi (s_1 \scdots \widehat{s_{i_1}} \scdots \widehat{s_{i_m}} \scdots s_{n
    - 1}) \mu (s_{n - 1}) -_{} \psi (s_1 \scdots \widehat{s_{i_1}} \scdots
    \widehat{s_{i_m}} \scdots s_{n - 2})] \mu (s_{n - 2}) \scdots \mu (s_1) & +
    & \\
    \vdots &  & \\
    {}[\psi (s_1 \scdots \widehat{s_{i_1}} \scdots \widehat{s_{i_m}}) \mu
    (s_{i_m}) -_{} C (m) \psi (s_1 \scdots \widehat{s_{i_1}} \scdots
    \widehat{s_{i_m}})] \mu (s_{i_m - 1}) \scdots \mu (s_1) & + & \\
    C (m) [\psi (s_1 \scdots \widehat{s_{i_1}} \scdots s_{i_m - 1}) \mu (s_{i_m
    - 1}) -_{} \psi (s_1 \scdots \widehat{s_{i_1}} \scdots s_{i_m - 2})] \mu
    (s_{i_m - 2}) \scdots \mu (s_1) & + & \\
    \vdots &  & \\
    C (m) \cdots C (1) [\psi (s_1) \mu (s_1) - \psi (1)] & + & \\
    C (m) \cdots C (1) \psi (1), &  & 
  \end{eqnarray*}
  where
  \[ C (k) = \frac{1 - q^{- 1} \tmmathbf{z}^{\gamma_k}}{1
     -\tmmathbf{z}^{\gamma_k}}, \hspace{2em} \gamma_k = s_n s_{n - 1} \cdots
     s_{i_k + 1} (\alpha_{i_k}) . \]
  The summation telescopes with the terms cancelling in pairs. We will show
  that $\Lambda$ annihilates every term except the last, so that $m (u, v) = C
  (m) \cdots C (1)$, as required.
  
  We note that the terms of the form
  \[ \prod_{j > k} C (j) [\psi (s_1 \cdots \widehat{s_{i_1}} \cdots
     \widehat{s_{i_k}}) \mu (s_{i_k}) -_{} C (k) \psi (s_1 \cdots
     \widehat{s_{i_1}} \cdots \widehat{s_{i_k}})] \mu (s_{i_k + 1}) \cdots \mu
     (s_1) \]
  are equal to zero by Proposition~\ref{firstusefulfact}. The spectral
  parameter for $\mu (s_{i_k})$ is $s_{i_{k + 1}} \cdots s_{i_n}
  (\tmmathbf{z})$ so
  \[ C (k) = \frac{1 - q^{- 1} \tmmathbf{z}^{\gamma_k}}{1
     -\tmmathbf{z}^{\gamma_k}} = \frac{1 - q^{- 1} (s_{i_{k + 1}} \cdots
     s_{i_n} (\tmmathbf{z}))^{\alpha_{i_k}}}{1 - (s_{i_{k + 1}} \cdots s_{i_n}
     (\tmmathbf{z}))^{\alpha_{i_k}}}, \]
  as in Proposition~\ref{firstusefulfact}.
  
  Each remaining terms is a constant times
  \[ [\psi (s_1 \cdots \widehat{s_{i_1}} \cdots \widehat{s_{i_2}} \cdots s_j)
     \mu (s_j) - \psi (s_1 \cdots \widehat{s_{i_1}} \cdots \widehat{s_{i_2}}
     \cdots s_{j - 1})] \mu (s_{j - 1}) \cdots \mu (s_1) . \]
  By Proposition~\ref{firstusefulfact} we have
  \[ \psi (s_1 \cdots \widehat{s_{i_1}} \cdots \widehat{s_{i_2}} \cdots s_j)
     \mu (s_j) - \psi (s_1 \cdots \widehat{s_{i_1}} \cdots \widehat{s_{i_2}}
     \cdots s_{j - 1}) \geqslant s_1 \cdots \widehat{s_{i_1}} \cdots
     \widehat{s_{i_2}} \cdots s_j, \]
  and so by Proposition~\ref{ominusprop} this term is $\geqslant s_1 \cdots
  \widehat{s_{i_1}} \cdots \widehat{s_{i_2}} \cdots s_j \ominus s_{j - 1}
  \ominus s_{j - 2} \ominus \cdots \ominus s_1$ in the
  notation~(\ref{ominusdef}). Thus this term is annihilated by $\Lambda$
  unless

  \begin{equation}
    \label{ominusidentity} s_1 \cdots \widehat{s_{i_1}} \cdots
    \widehat{s_{i_2}} \cdots s_j \ominus s_{j - 1} \ominus s_{j - 2} \ominus
    \cdots \ominus s_1 = 1.
  \end{equation}
  We will assume this and deduce a contradiction. If this is true, then we may
  write
  \begin{equation}
    \label{usegmentrewrite} s_1 \cdots \widehat{s_{i_1}} \cdots
    \widehat{s_{i_2}} \cdots s_j = s_1 \cdots \widehat{s_{j_1}} \cdots
    \widehat{s_{j_k}} \cdots s_{j - 1},
  \end{equation}
  where $j_k, j_{k - 1}, \cdots$ $j_1$ are the locations in
  (\ref{ominusidentity}) where the $\ominus$ is {\tmem{not}} a descent. In
  other words, the left hand side of (\ref{ominusidentity}) is of the form
  $s_1 \cdots \widehat{s_{i_1}} \cdots \widehat{s_{i_2}} \cdots s_j s_{j - 1}
  \cdots \widehat{s_{j_k}} \cdots \widehat{s_{j_1}} \cdots s_1$, and we have
  moved terms to the other side to obtain (\ref{usegmentrewrite}).
  
  Now using (\ref{usegmentrewrite}) we may write
  \begin{equation}
    \label{urewrite} u = s_1 \cdots \widehat{s_{i_1}} \cdots s_n = s_1 \cdots
    \widehat{s_{j_1}} \cdots \widehat{s_{j_k}} \cdots s_{j - 1} \widehat{s_j}
    s_{j + 1} \cdots \widehat{s_{i_m}} \cdots s_n,
  \end{equation}
  for we have substituted the right-hand side of (\ref{usegmentrewrite}) for
  an initial segment in the word representing $u$. Now let $\delta = s_n s_{n
  - 1} \cdots s_{j + 1} (\alpha_j)$. Then
  \[ v \, r_{\delta} = s_1 \cdots \widehat{s_j} \cdots s_n, \]
  with only one omitted entry $s_j$. Clearly $v r_{\delta} < v$ and by
  (\ref{urewrite}) we have $u \leqslant v r_{\delta}$. Thus $\delta \in S (u,
  v)$. This is a contradiction, however, because the list
  \[ s_n \cdots s_{k + 1} (\alpha_k) \]
  of positive roots $\alpha$ such that $v (\alpha) \in \hat{\Phi}^-$ has no
  repetitions by Proposition~\ref{rootlist}. But $j$ is not among the set
  $\{j_1, \cdots, j_m \}$.
\end{proof}

\section{Towards Conjecture~\ref{conjecture2}}

\begin{proposition}
  \label{conjecture3dual}If Conjecture~\ref{conjecture3} is true and if $u <
  v$ and $P_{w_0 v, w_0 u} = 1$ then there exists $\beta \in \hat{\Phi}^+$
  such that $u \leqslant t \leqslant v$ if and only if $u \leqslant r_{\beta}
  t \leqslant v$.
\end{proposition}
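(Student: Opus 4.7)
The plan is to transfer Conjecture~\ref{conjecture3} across the Bruhat antiautomorphism $t \mapsto w_0 t$, which reverses the order: $x \leqslant y$ if and only if $w_0 y \leqslant w_0 x$. So I would set $u' = w_0 v$ and $v' = w_0 u$. The hypothesis $u < v$ then gives $u' < v'$, and by assumption $P_{u', v'} = P_{w_0 v, w_0 u} = 1$, so Conjecture~\ref{conjecture3} applies to the pair $(u', v')$ and produces some $\beta \in \hat{\Phi}^+$ with the property that $u' \leqslant s \leqslant v'$ if and only if $u' \leqslant r_\beta s \leqslant v'$.

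Next I would translate this back to the interval $[u,v]$ using the bijection $t \leftrightarrow w_0 t$ from $[u,v]$ to $[u', v']$. Applying the Conjecture~\ref{conjecture3} equivalence at $s = w_0 t$, we get that $t \in [u,v]$ if and only if $r_\beta w_0 t \in [u', v']$.

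The final step is to rewrite the reflection: since $w_0^2 = 1$, we have $r_\beta w_0 = w_0 (w_0 r_\beta w_0) = w_0 r_{w_0(\beta)}$. Because $w_0$ sends every positive root to a negative root, $w_0(\beta) \in \hat{\Phi}^-$, so $\beta' := -w_0(\beta)$ lies in $\hat{\Phi}^+$ and $r_{\beta'} = r_{w_0(\beta)}$. Thus $r_\beta w_0 t \in [u', v']$ is equivalent to $w_0 r_{\beta'} t \in [u', v']$, which by the order-reversing bijection is equivalent to $r_{\beta'} t \in [u,v]$. Chaining the equivalences yields the desired statement with the positive coroot $\beta'$.

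There is no real obstacle here; the argument is purely formal once Conjecture~\ref{conjecture3} is available. The only items to verify are that $\beta' \in \hat{\Phi}^+$ (immediate from the fact that $w_0$ reverses positivity) and that the bijection $t \leftrightarrow r_{\beta'} t$ stabilizes $[u,v]$, which follows directly from the fact that the corresponding map $s \leftrightarrow r_\beta s$ stabilizes $[u', v']$ together with the compatibility of $w_0$-conjugation with the antiautomorphism.
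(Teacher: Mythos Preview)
Your proposal is correct and follows essentially the same argument as the paper: apply Conjecture~\ref{conjecture3} to the pair $(w_0 v, w_0 u)$, then conjugate the resulting reflection by $w_0$ (taking the new root to be $-w_0$ of the old one) and use that $t \mapsto w_0 t$ reverses the Bruhat order. The only cosmetic difference is notation---the paper calls the root produced by the conjecture $\gamma$ and the conjugated root $\beta$, whereas you call them $\beta$ and $\beta'$ respectively.
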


\begin{proof}
  The conjecture implies that there exists $\gamma$ such that $w_0 v \leqslant
  t \leqslant w_0 u$ if and only if $w_0 v \leqslant r_{\gamma} t \leqslant
  w_0 u$. Then we may take $\beta = - w_0 (\gamma)$ so that $r_{\beta} = w_0
  r_{\gamma} w_0$ and
  \begin{eqnarray*}
    u \leqslant t \leqslant v \hspace{1em} \Leftrightarrow \hspace{1em} w_0 v
    \leqslant w_0 t \leqslant w_0 u \hspace{1em} \hspace{1em} \Leftrightarrow
    &  & w_0 v \leqslant r_{\gamma} w_0 t \leqslant w_0 u\\
    \hspace{1em} \Leftrightarrow &  & u \leqslant w_0 r_{\gamma} w_0 t
    \leqslant v.
  \end{eqnarray*}
\end{proof}

Although we do not see how to deduce Conjecture~\ref{conjecture2} from
Conjecture~\ref{conjecture1}, we have the following special case.

\begin{theorem}
  \label{conjtwoprogress}Conjectures~\ref{conjecture1} and~\ref{conjecture3}
  imply that if $P_{u, v} = P_{w_0 v, w_0 u} = 1$ then~(\ref{mtildeval}) is
  satisfied.
\end{theorem}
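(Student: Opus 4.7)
The plan is to induct on $n = l(v) - l(u)$, with the base case $u = v$ immediate from $\tilde{m}(u,u) = 1$ and the empty product. For the inductive step, I start from the defining inversion identity $\sum_{u \leqslant t \leqslant v} \tilde{m}(u,t)\, m(t,v) = 0$ (for $u < v$), which together with $m(v,v) = 1$ solves as
\[
\tilde{m}(u,v) \;=\; -\sum_{u \leqslant t < v} \tilde{m}(u,t)\, m(t,v).
\]

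To substitute closed forms into this sum, I invoke the standard hereditary monotonicity of Kazhdan--Lusztig polynomials: the hypothesis $P_{u,v} = 1$ propagates to $P_{u,t} = 1$ for every $t \in [u,v]$, and $P_{w_0 v, w_0 u} = 1$ propagates to both $P_{w_0 v, w_0 t} = 1$ and $P_{w_0 t, w_0 u} = 1$. By Proposition~1 this forces $|S(t,v)| = l(v) - l(t)$ and $|S'(u,t)| = l(t) - l(u)$ for every such $t$. Hence Conjecture~\ref{conjecture1} supplies the closed form for $m(t,v)$, and the inductive hypothesis supplies the closed form for $\tilde{m}(u,t)$ at every $t \in [u,v)$.

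Conjecture~\ref{conjecture3} then enters as the structural input: applied to $P_{u,v} = 1$ it produces $\beta \in \hat{\Phi}^+$ such that $\rho(t) := r_\beta t$ is a fixed-point-free involution of $[u,v]$, and Proposition~\ref{conjecture3dual} similarly uses $P_{w_0 v, w_0 u} = 1$ to produce a second such involution. I would use these (supplemented by right-multiplication pairings $t \leftrightarrow ts$ for simple reflections $s \in \tmop{Asc}(u) \cap \tmop{Des}(v)$, whose existence follows from the lifting property of Bruhat order combined with our KL hypotheses) to partition $[u,v)$ into pairs plus a single residual singleton. For each pair $\{t, t'\}$ with $l(t') = l(t) \pm 1$ one must check $\tilde{m}(u,t)\, m(t,v) + \tilde{m}(u,t')\, m(t',v) = 0$; the residual contribution then matches the claimed value of $\tilde{m}(u,v)$ after one final application of the inductive formula together with Conjecture~\ref{conjecture1}.

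The principal obstacle is this pairwise cancellation, which reduces (after substituting the closed forms) to the multiset identity $S'(u,t) \cup S(t,v) = S'(u,t') \cup S(t',v)$ together with the parity flip $(-1)^{l(t')-l(t)} = -1$. Small checks in type $A_2$ with $(u,v) = (1, w_0)$ and $(\sigma_1, w_0)$ confirm both the multiset identity and the overall strategy, but also reveal that no single global involution realizes the pairing: the effective pairing varies with $t$. Constructing the pairing coherently using the combinatorics that Conjecture~\ref{conjecture3} provides, and verifying the multiset identity in full generality, is the technical heart of the argument.
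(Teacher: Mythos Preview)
Your overall architecture --- induction on $l(v)-l(u)$, the matrix-inversion identity, hereditary propagation of the Kazhdan--Lusztig hypotheses, and cancellation via the involution $t\mapsto r_\beta t$ furnished by Conjecture~\ref{conjecture3} --- is exactly the paper's. The gap is that you chose the wrong order for the inversion sum, and this is precisely what generates the obstacle you describe in your final paragraph.

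You work with $\sum_t \tilde m(u,t)\,m(t,v)=0$, so after substitution the summand is $(-1)^{l(t)-l(u)}\prod_{\alpha\in S'(u,t)}R(\alpha)\cdot\prod_{\alpha\in S(t,v)}R(\alpha)$. The two index sets are conditions on $u r_\alpha$ and on $v r_\alpha$ respectively; they depend on the fixed endpoints, not on the running variable $t$, and there is no reason for a single reflection on the left of $t$ to preserve them. This is why your $A_2$ checks showed that no global involution realizes the pairing, and why you were driven toward ad hoc ascent/descent pairings and a residual singleton.

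The paper instead uses the other order, $\sum_t m(u,t)\,\tilde m(t,v)=0$, with the unknown $\tilde m(u,v)$ sitting at $t=u$. Now
\[
m'(u,t)\,\tilde m'(t,v)\;=\;(-1)^{l(v)-l(t)}\prod_{\alpha\in S(u,t)}R(\alpha)\prod_{\alpha\in S'(t,v)}R(\alpha),
\]
and both index sets are conditions on $t r_\alpha$: one says $u\le t r_\alpha< t$, the other $t< t r_\alpha\le v$. Since $t r_\alpha\neq t$, their union is exactly $\{\alpha:u\le t r_\alpha\le v\}$, and the summand collapses to $(-1)^{l(v)-l(t)}\prod_{u\le t r_\alpha\le v}R(\alpha)$. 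The single involution $t\mapsto r_\beta t$ now works on the nose: $u\le t r_\alpha\le v$ iff $u\le r_\beta(t r_\alpha)=(r_\beta t)r_\alpha\le v$, so the product is unchanged while the sign flips, and the whole sum over $[u,v]$ vanishes. Since every term with $t\neq u$ already agrees with its primed version (Conjecture~\ref{conjecture1} for $m(u,t)$, induction for $\tilde m(t,v)$), the $t=u$ term forces $\tilde m(u,v)=\tilde m'(u,v)$. No multiple pairings, no residual singleton --- switching the order of the factors dissolves the difficulty entirely.
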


\begin{proof}
  With notation as in Conjecture \ref{conjecture3} we note that the map $t
  \longmapsto t' = r_{\beta} t$ is a bijection of the set $\{t| u \leqslant t
  \leqslant v\}$ to itself such that $l (t) \nequiv l (t')$ mod 2 and such
  that
  \begin{equation}
    \label{equalityofreflections} \{\alpha |u \leqslant t r_{\alpha} \leqslant
    v\}=\{\alpha |u \leqslant t' r_{\alpha} \leqslant v\}.
  \end{equation}
  Indeed the property that $r_{\beta}$ has, applied to $t r_{\alpha}$ instead
  of $t$, implies that $u \leqslant t r_{\alpha} \leqslant v$ if and only if
  $u \leqslant r_{\beta} t r_{\alpha} \leqslant v$.
  
  Let $M$ and $\tilde{M}$ be the matrices with coefficients $m (u, v)$ and
  $\tilde{m} (u, v)$. We know that these matrices are upper triangular with
  respect to the Bruhat order, that is, $m (u, v) = \tilde{m} (u, v) = 0$
  unless $u \leqslant v$. Assuming Conjecture 1 if $P_{w_0 v, w_0 u} = 1$ then
  $m (u, v) = m' (u, v)$ where
  \begin{equation}
    \label{misprod} m' (u, v) = \prod_{\tmscript{\begin{array}{c}
      \alpha \in \hat{\Phi}^+\\
      u \leqslant v r_{\alpha} < v
    \end{array}}} R (\alpha), \hspace{2em} R (\alpha) = \frac{1 - q^{- 1}
    \tmmathbf{z}^{\alpha}}{1 -\tmmathbf{z}^{\alpha}} .
  \end{equation}
  According to Conjecture 2 if $P_{u, v} = 1$ then we should have $\tilde{m}
  (u, v) = \tilde{m}' (u, v)$ where

  \begin{equation}
    \label{mtilprod} \tilde{m}' (u, v) = (- 1)^{l (v) - l (u)}
    \prod_{\tmscript{\begin{array}{c}
      \alpha \in \hat{\Phi}^+\\
      u \leqslant u r_{\alpha} < v
    \end{array}}} R (\alpha) .
  \end{equation}
  By induction, we may assume that the counterexample minimizes $l (v) - l
  (u)$. If $u < t \leqslant v$ then $P_{t, v} = 1$ and $P_{w_0 t, w_0 u} = 1$.
  Therefore $m (u, t) = m' (u, t)$ and $\tilde{m} (t, v) = \tilde{m}' (t, v)$.
  Now since $M$ and $\tilde{M}$ are inverse matrices, we have
  \[ \sum_{u \leqslant t \leqslant v} m (u, t) \tilde{m} (t, v) = 0, \]
  and in this relation $m (u, t) = m' (u, t)$ is assumed for all $t$ and
  $\tilde{m} (t, v) = \tilde{m}' (t, v)$ is proved for all $t$ except when $t
  = u$. Therefore $\tilde{m} (u, v) = \tilde{m}' (u, v)$ will follow if we
  prove
  \[ \sum_{u \leqslant t \leqslant v} m' (u, t) \tilde{m}' (t, v) = 0. \]
  We have
  \[ m' (u, t) \tilde{m}' (t, v) = (- 1)^{l (v) - l (t)} \prod_{u \leqslant t
     r_{\alpha} \leqslant t} R (\alpha) \prod_{t \leqslant t r_{\alpha}
     \leqslant v} R (\alpha) = (- 1)^{l (v) - l (t)} \prod_{u \leqslant t
     r_{\alpha} \leqslant v} R (\alpha) \]
  because by Proposition~\ref{wralphacrit} we always have either $t r_{\alpha}
  < t$ or $t < t r_{\alpha}$. Using Conjecture~\ref{conjecture3} and
  Proposition~\ref{conjecture3dual} there is a bijection $t \mapsto r_{\beta}
  t$ for some reflection $r_{\beta}$ that stabilizes the Bruhat interval $u
  \leqslant t \leqslant v$. With $u \leqslant t \leqslant v$ this means that
  $u \leqslant t r_{\alpha} \leqslant v$ if and only if $u \leqslant r_{\beta}
  t r_{\alpha} \leqslant v$ and
  \[ (- 1)^{l (v) - l (t)} \prod_{u \leqslant t r_{\alpha} \leqslant v} R
     (\alpha) = - (- 1)^{l (v) - l (r_{\beta} t)} \prod_{u \leqslant r_{\beta}
     t r_{\alpha} \leqslant v} R (\alpha), \]
  so the terms corresponding to $t$ and $r_{\beta} t$ cancel.
\end{proof}


\begin{thebibliography}{10}
  \bibitem[1]{Bourbaki}Nicolas Bourbaki. {\newblock}\tmtextit{Lie groups and
  Lie algebras. Chapters 4--6}. {\newblock}Elements of Mathematics (Berlin).
  Springer-Verlag, Berlin, 2002. {\newblock}Translated from the 1968 French
  original by Andrew Pressley.
  
  \bibitem[2]{Carrell}James~B. Carrell. {\newblock}The Bruhat graph of a
  Coxeter group, a conjecture of Deodhar, and rational smoothness of Schubert
  varieties. {\newblock}In \tmtextit{Algebraic groups and their
  generalizations: classical methods (University Park, PA, 1991)}, volume~56
  of \tmtextit{Proc. Sympos. Pure Math.}, pages 53--61. Amer. Math. Soc.,
  Providence, RI, 1994.
  
  \bibitem[3]{CasselmanSpherical}W.~Casselman. {\newblock}The unramified
  principal series of $ p$-adic groups. I. The spherical function.
  {\newblock}\tmtextit{Compositio Math.}, 40(3):387--406, 1980.
  
  \bibitem[4]{CasselmanShalika}W.~Casselman and J.~Shalika. {\newblock}The
  unramified principal series of $p$-adic groups. II. The Whittaker function.
  {\newblock}\tmtextit{Compositio Math.}, 41(2):207--231, 1980.
  
  \bibitem[5]{Chevalley}C.~Chevalley. {\newblock}Sur certains groupes simples.
  {\newblock}\tmtextit{T\^ohoku Math. J. (2)}, 7:14--66, 1955.
  
  \bibitem[6]{DeodharCharacterizations}Vinay~V. Deodhar. {\newblock}Some
  characterizations of Bruhat ordering on a Coxeter group and determination of
  the relative M\"obius function. {\newblock}\tmtextit{Invent. Math.},
  39(2):187--198, 1977.
  
  \bibitem[7]{DeodharConjecture}Vinay~V. Deodhar. {\newblock}Local Poincar\'e
  duality and nonsingularity of Schubert varieties. {\newblock}\tmtextit{Comm.
  Algebra}, 13(6):1379--1388, 1985.
  
  \bibitem[8]{DeodharGeometric}Vinay~V. Deodhar. {\newblock}On some geometric
  aspects of Bruhat orderings. I. A finer decomposition of Bruhat cells.
  {\newblock}\tmtextit{Invent. Math.}, 79(3):499--511, 1985.
  
  \bibitem[9]{Dyer}M.~J. Dyer. {\newblock}The nil Hecke ring and Deodhar's
  conjecture on Bruhat intervals. {\newblock}\tmtextit{Invent. Math.},
  111(3):571--574, 1993.
  
  \bibitem[10]{HainesKottwitzPrasad}Thomas~J. Haines, Robert~E. Kottwitz, and
  Amritanshu Prasad. {\newblock}Iwahori-Hecke algebras.
  {\newblock}\tmtextit{\tmtexttt{http://arxiv.org/abs/math/0309168}}, 2003.
  
  \bibitem[11]{IwahoriMatsumoto}N.~Iwahori and H.~Matsumoto. {\newblock}On
  some Bruhat decomposition and the structure of the Hecke rings of $ p$-adic
  Chevalley groups. {\newblock}\tmtextit{Inst. Hautes \'Etudes Sci. Publ.
  Math.}, (25):5--48, 1965.
  
  \bibitem[12]{Iwahori}Nagayoshi Iwahori. {\newblock}Generalized Tits system
  (Bruhat decompostition) on $p$-adic semisimple groups. {\newblock}In
  \tmtextit{Algebraic Groups and Discontinuous Subgroups (Proc. Sympos. Pure
  Math., Boulder, Colo., 1965)}, pages 71--83. Amer. Math. Soc., Providence,
  R.I., 1966.
  
  \bibitem[13]{KazhdanLusztig}David Kazhdan and George Lusztig.
  {\newblock}Representations of Coxeter groups and Hecke algebras.
  {\newblock}\tmtextit{Invent. Math.}, 53(2):165--184, 1979.
  
  \bibitem[14]{LanglandsEuler}Robert~P. Langlands. {\newblock}\tmtextit{Euler
  products}. {\newblock}Yale University Press, New Haven, Conn., 1971.
  {\newblock}A James K. Whittemore Lecture in Mathematics given at Yale
  University, 1967, Yale Mathematical Monographs, 1.
  
  \bibitem[15]{Matsumoto}Hideya Matsumoto. {\newblock}\tmtextit{Analyse
  harmonique dans les syst\`emes de Tits bornologiques de type affine}.
  {\newblock}Lecture Notes in Mathematics, Vol. 590. Springer-Verlag, Berlin,
  1977.
  
  \bibitem[16]{Polo}Patrick Polo. {\newblock}On Zariski tangent spaces of
  Schubert varieties, and a proof of a conjecture of Deodhar.
  {\newblock}\tmtextit{Indag. Math. (N.S.)}, 5(4):483--493, 1994.
  
  \bibitem[17]{Reeder}Mark Reeder. {\newblock}On certain Iwahori invariants in
  the unramified principal series. {\newblock}\tmtextit{Pacific J. Math.},
  153(2):313--342, 1992.
  
  \bibitem[18]{Rogawski}J.~D. Rogawski. {\newblock}On modules over the Hecke
  algebra of a $p$-adic group. {\newblock}\tmtextit{Invent. Math.},
  79(3):443--465, 1985.
  
  \bibitem[19]{SAGE}W.~Stein~et. al. {\newblock}SAGE Mathematical Software,
  Version 4.3.2. {\newblock}http://www.sagemath.org, 2010.
  
  \bibitem[20]{Stembridge}John~R. Stembridge. {\newblock}A short derivation of
  the M\"obius function for the Bruhat order. {\newblock}\tmtextit{J.
  Algebraic Combin.}, 25(2):141--148, 2007.
  
  \bibitem[21]{Verma}Daya-Nand Verma. {\newblock}M\"obius inversion for the
  Bruhat ordering on a Weyl group. {\newblock}\tmtextit{Ann. Sci. \'Ecole
  Norm. Sup. (4)}, 4:393--398, 1971.
\end{thebibliography}
\end{document}